\newtheorem{proposition}{Proposition}
\newtheorem{theorem}{Theorem}
\newtheorem{lemma}{Lemma}
\newtheorem{rk}{Remark}
\newtheorem{definition}{Definition}
\newcommand{\re}{\mathbb{R}}
\newcommand{\p}{\mathbb{P}}
\newcommand{\grad}{\nabla}
\def\esp{\mathbb{E}}
\def\bN{\mathbb{N}}
\newcommand{\wh}{\mathscr{W}}
\newcommand{\bW}{\overline{W}}
\newcommand{\field}{\mathcal{X}}
\newcommand{\mart}{\mathcal{M}}
\newcommand{\spart}{\mathcal{S}}
\newcommand{\apart}{\mathcal{B}}
\newcommand{\mpart}{\widetilde{\mathcal{B}}}
\newcommand{\fieldn}{\mathcal{X}^n}
\newcommand{\gradn}{\nabla^n}
\newcommand{\lapln}{\Delta^n}
\newcommand{\cale}{\mathcal{E}}
\def\Vvv{{\rm\mathbb{V}ar}}  
\def \simless {\mathbin{\lower 3pt\hbox{$\rlap{\raise 5pt
              \hbox{$\char'074$}}\mathchar"7218$}}}
\author[Milton Jara and Gregorio R. Moreno Flores]{Milton Jara$^1$ and Gregorio R. Moreno Flores$^2$}
\thanks{ AMS 2010 {\it subject classifications}. Primary  60H15
%82C22, 82C41, 60K35, 82C22;
 secondary 60K35, 82B44
 %82C24, 60K05, 60G50, 60G70, 82B43.
 }
\thanks{{\it Key words and phrases.} 
%Regeneration times,  Interacting Particle Systems, Front propagation.}
KPZ Equation, Burgers Equation, Directed Polymers}
\thanks{$^1$ Instituto de Matem\'atica Pura e Aplicada. Partially supported by CNPq and FAPERJ}
\thanks{$^2$ Pontificia Universidad Cat\'olica de Chile. Partially supported by Fondecyt grant 1171257 and N\'ucleo Milenio `Modelos Estoc\'asticos de Sistemas Complejos y Desordenados'}
\thanks{This project has received funding from the European Research Council  
(ERC) under the European Union’s Horizon 2020 research and innovative  
programme (grant agreement No 715734), and from MATH Amsud `Random Structures and Processes in Statistical Mechanics'}
\address[Milton Jara]{Instituto de Matem\'atica Pura e Aplicada, Estrada Dona Castorina 110, 22460320 Rio de Janeiro, Brazil}
\email{mjara@impa.br}
\address[Gregorio R. Moreno Flores]{Facultad de Matem\'aticas\\
Pontificia Universidad Cat\'olica de Chile\\
Vicu\~na Mackenna 4860, Macul\\
Santiago, Chile}
\email{grmoreno@mat.uc.cl}
\email{}
\date{}
\title[Stationary directed polymers]{Stationary Directed Polymers and Energy Solutions of the Burgers Equation}
\begin{document}
	
\begin{abstract} 
	We consider the stationary O'Connell-Yor model of semi-discrete directed polymers in a Brownian environment in the intermediate disorder regime and show convergence of the increments of the log-partition function to the energy solutions of the stochastic Burgers equation. 
	
	The proof does not rely on the Cole-Hopf transform and avoids the use of spectral gap estimates for the discrete model. The key technical argument is a second-order Boltzmann-Gibbs principle.
\end{abstract}

\maketitle
%\tableofcontents

%%%%%%%%%%%%%%%%%%%%%%%%%%%%%%%%%%%%%%%%%%%%%%%%%%%%%%%%
%%%%%%%%%%%%%%%%%%%%%%%%%%%%%%%%%%%%%%%%%%%%%%%%%%%%%%%%
%%%%%%%%%%%%%%%%%%%%%%%%%%%%%%%%%%%%%%%%%%%%%%%%%%%%%%%%
%%%%%%%%%%%%%%%%%%%%%%%%%%%%%%%%%%%%%%%%%%%%%%%%%%%%%%%%
%%%%%%%%%%%%%%%%%%%%%%%%%%%%%%%%%%%%%%%%%%%%%%%%%%%%%%%%
%%%%%%%%%%%%%%%%%%%%%%%%%%%%%%%%%%%%%%%%%%%%%%%%%%%%%%%%

\section{Introduction, Model and Results}

%%%%%%%%%%%%%%%%%%%%%%%%%%%%%%%%%%%%%%%%%%%%%%%%%%%%%%%%%%
%%%%%%%%%%%%%%%%%%%%%%%%%%%%%%%%%%%%%%%%%%%%%%%%%%%%%%%%%%
%%%%%%%%%%%%%%%%%%%%%%%%%%%%%%%%%%%%%%%%%%%%%%%%%%%%%%%%%%
%%%%%%%%%%%%%%%%%%%%%%%%%%%%%%%%%%%%%%%%%%%%%%%%%%%%%%%%%%
\subsection{KPZ equation and Stochastic Burgers equation}

The Kardar-Parisi-Zhang equation \cite{KPZ}, or KPZ equation, was introduced in the physics literature as a model for interface motions in generic situations. The typical physical set-up is the following: suppose we have a thin physical system where a stable and a meta-stable phase can coexist and suppose both phases are separated by an interface. We are concerned with the behaviour of such an interface as the stable phase invades the meta-stable region. The first thing one observes is a net motion of the interface, meaning that it has in average a non-zero velocity. At a closer look, we can observe very intricate fluctuations with an atypical order of magnitude of $t^{1/3}$ and highly non-Gaussian statistics. Assuming that the position of the interface can be (locally) described by a height function $h(t,x)$, the authors of \cite{KPZ} conclude that its dynamics is governed by the equation
\begin{eqnarray*}
	\partial_t h &=& \nu \partial^2_x h + \lambda \left| \partial_x h\right|^2 + \sqrt{D} \wh,
\end{eqnarray*}
where $\wh$ is a space-time white noise and $\nu$, $\lambda$ and $D$ are constants that depend on the precise model under consideration. In particular, the quantity $\sqrt{D}$ represents the intensity of the noise which, in the terminology of \cite{SQ}, represents the random back and forth between the two phases.

Perhaps the most accurate experimental realization of this dynamics is given by the growing interfaces in liquid crystal turbulence (see \cite{liquid} and references therein). There, a thin film of turbulent liquid crystal is kept out of thermal equilibrium. Then, a seed of the stable phase is created and grows as a cluster. The statistics of the fluctuation of the interface separating the two phases match the theoretical predictions with spectacular accuracy.

\vspace{1ex}

The KPZ equation constitutes a particular representative of a huge family of models known as the KPZ universality class. These models are characterized by displaying cube-root fluctuations which laws rescale to non-Gaussian distributions that first appeared in random matrix theory. Despite their complicated nature, some of these models have even explicit laws  that allow for fine asymptotic analysis. The study of these models  has generated an huge body of work in the mathematics and physics communities  that is impossible to summarize in a concise way. We refer the reader to the reviews \cite{FS}, \cite{C-review} and \cite{Q-review} for a recent exposition of the state of the art.

Here, we will not be concerned with the `integrable' nature of the KPZ universality class but will instead focus on a particular model which, in a very precise regime, rescales to the KPZ equation. The emergence of KPZ as scaling of discrete models first appeared in the work \cite{BG} for the weakly asymmetric exclusion process, the main representative of the so-called \textit{weakly asymmetric limits}. This type of limit has since then appeared in many contexts, for instance \cite{ACQ,GJ-arma,GJSeth,JM} among others. Our setting is an example of a different kind of limit, the \textit{intermediate disorder regime}, first observed in \cite{AKQ} (see also \cite{MQR,CG}).

\vspace{1ex}

The KPZ equation has two `avatars': the stochastic Burgers equation and the stochastic heat equation. Letting $u=\partial_x h$ be the slope of $h$, we can see that $u$ satisfies the equation
\begin{eqnarray}\label{eq:Burgers}
	\partial_t u = \nu \partial_x^2 u + \lambda \partial_x u^2 + \sqrt{D} \partial_x \wh.
\end{eqnarray}
This is known as the stochastic Burgers equation. The notion of solutions for the KPZ and Burgers equation is mathematically very delicate.  In the KPZ equation, the best space regularity one can expect is that of a Brownian motion. As such, its first derivative is a genuine distribution and its square requires a very careful treatment to be properly defined. Burgers equation of course shares a similar problems and is in fact distribution-valued.

An early solution to this issue consisted in taking a clever non-linear transform of $h$ which removes the non-linear part of the equation: let $\displaystyle Z(t,x)=\exp\left\{\frac{\lambda}{\nu}\log h(t,x)\right\}$. Then, $Z$ satisfies the equation
\begin{eqnarray}\label{eq:SHE}
	\partial_t Z = \nu \partial_x^2 Z + \frac{\lambda \sqrt{D}}{\nu} Z \wh,
\end{eqnarray}
known as the stochastic heat equation (SHE). This equation can be solved by ad-hoc methods and, then, the solution to KPZ can be `defined' as $\displaystyle h(t,x) = \frac{\nu}{\lambda} \log Z(t,x)$. This is the so-called Cole-Hopf solution to the KPZ equation which can be traced back at least to \cite{BG}. Although this provides a notion of solution which is useful for many purposes (such as showing the convergence of a wide family of discrete models), it is unsatisfactory in the sense that it does not show that $h$ actually satisfies an equation. We will come back to the Cole-Hopf solution in the next section as it provides a natural link between KPZ/Burgers/SHE and directed polymers.

In recent years, more robust theories of existence and uniqueness of KPZ/Burgers equation have emerged. The first one \cite{H-KPZ} was pioneered by M. Hairer and lead to the development of the theory of regularity structures \cite{H-RS}. This theory allows to give a solid notion of solution for a wide family of singular stochastic PDEs as well as providing a framework to prove the convergence of discrete models \cite{HM, CM}.  

This breakthrough was shortly followed by an existence and uniqueness theory for KPZ/Burgers in the framework of paracontrolled distributions \cite{GP-reloaded}. As in the case of regularity structures, this theory can be used to treat other stochastic PDEs beyond KPZ \cite{CC} and is amenable to show the convergence of discrete models \cite{CGP, MP}. Furthermore, this theory can be succesfully applied to the KPZ equation on the whole real line \cite{PR}. 

A third approach is provided by the theory of energy solutions introduced in \cite{GJ-arma} and further developed in \cite{GuJ}. In this approach, Burgers equation is formulated as a martingale problem. Uniqueness for this weak formulation on the whole line was proved in \cite{GP-uniqueness}. Since \cite{GJ-arma}, this approach was successfully applied to show the convergence of many discrete models to the KPZ/ Burgers equation \cite{GJSeth, GJS, DGP, JM}. One substantial advantage is that it requires very weak quantitative estimates. So far, this theory is mainly restricted to the stationary setting (see however \cite{GP-noneq, GP-generator}). Let us go back to \eqref{eq:Burgers}. If $\lambda=0$, the equation becomes a linear stochastic heat equation with additive noise. Its solutions are explicit, Gaussian and have the spatial white noise as an invariant measure. One remarkable fact is that this invariant measure is preserved by the addition of the non-linear term. In the KPZ setting, the white noise initial condition corresponds to a double-sided Brownian motion. Of course, even if at fixed times the spatial distribution of the process is fairly simple, the time-correlations are extremely complicated. This will be the context considered in this work.

%%%%%%%%%%%%%%%%%%%%%%%%%%%%%%%%%%%%%%%%%%%%%%%%%%%%%%%%%%
%%%%%%%%%%%%%%%%%%%%%%%%%%%%%%%%%%%%%%%%%%%%%%%%%%%%%%%%%%
%%%%%%%%%%%%%%%%%%%%%%%%%%%%%%%%%%%%%%%%%%%%%%%%%%%%%%%%%%
%%%%%%%%%%%%%%%%%%%%%%%%%%%%%%%%%%%%%%%%%%%%%%%%%%%%%%%%%%
\subsection{The Cole-Hopf transformation, the Stochastic Heat Equation and Directed Polymer}

The Cole-Hopf transform provided a way to give a meaning to the solutions of the KPZ equation in terms of the solutions of the SHE. For the SHE, solutions can be described by means of a chaos expansion, taking advantage of the particular structure of the equation \cite{Q-review}. This also provided a starting point to show the convergence of discrete models which themselves satisfy a discrete version of the Cole-Hopf transform, for instance the exclusion process \cite{BG, ACQ} (for which the discrete Cole-Hopf transform dates back to \cite{Gartner}), directed polymers \cite{AKQ, MQR} which are naturally formulated at the level of the SHE, one-dimensional random walks in vanishing random environments \cite{CG} and weakly asymmetric bridges \cite{L} among others. Once again, we point the reader to the reviews \cite{C-review} and \cite{Q-review} for a more detailed summary of the field. 

This approach has two limitations. First, as it actually proves convergence to the SHE, it does not in principle provide convergence to KPZ/Burgers (regardless of the way these are interpreted). For instance, it does not allow to obtain direct convergence to Burgers equation for the occupation field of the exclusion process as in \cite{GJ-arma} although it can be used to show convergence of its height function to the Cole-Hopf solution of KPZ. Second, this approach relies heavily on the availability of a discrete Cole-Hopf transform which is not available for relevant models such as the Sasamoto-Spohn model \cite{SS} or the coupled diffusions considered in \cite{DGP}.

These difficulties can be circumvented using the theories described in the previous section. The works \cite{HM, CM} provide a general framework to treat discrete models in the context of regularity structures. In the framework of paracontrolled distributions, the work \cite{GP-reloaded} was successful in giving a first proof of the convergence of the periodic Sasamoto-Spohn model and the works \cite{CGP,MP} developed robust arguments to treat stochastic PDEs on the lattice. In the context of energy solutions, speed-change exclusion dynamics was treated in \cite{GJ-arma, GJSeth}, interacting diffusions in \cite{DGP} and the Sasamoto-Spohn model on the whole line in \cite{JM}. In all these last examples, direct convergence to Burgers equation for the fluctuation field is proved.

\vspace{1ex}

One interesting fact about the Cole-Hopf solution is that it gives a direct link between directed polymers and the KPZ/SHE equation. Consider the SHE \eqref{eq:SHE} with initial condition $Z(0,\cdot)=\delta_0$ and assume for a moment that the potential $\wh$ is smooth. Then, Feynman-Kac formula yields the explicit solution (with $D=1$ and $\lambda=\nu=\frac12$)
\begin{eqnarray}\label{eq:FK}
	Z(t,x) = \mathbf{E}_x\left[ e^{\int^t_0 \wh(t-s,b_s)\,ds}\right],
 \end{eqnarray}
 where $\mathbf{E}_x$ is the expectation with respect to the law of a Brownian bridge $(b_s:\, 0\leq s \leq t)$ with $b_0=0$ and $b_t=x$. As such, $Z$ can be viewed as the partition function of a directed polymer model where the energy of a path is given by $H(b)=\int^t_0 \wh(s,b_s)\,ds$. These directed polymers in random environment were introduced in the physics literature as a model for the roughening of interfaces in random environment \cite{Henley-Huse}. They have been the object of a vast body of mathematical work since then (see \cite{Comets-book} for a recent monograph on the subject). When $\wh$ is taken as a white noise, it is possible to give a sense to \eqref{eq:FK} (see \cite{Q-review}) and even to a \textit{continuum polymer measure} \cite{AKQ2}.

%%%%%%%%%%%%%%%%%%%%%%%%%%%%%%%%%%%%%%%%%%%%%%%%%%%%%%%%%%
%%%%%%%%%%%%%%%%%%%%%%%%%%%%%%%%%%%%%%%%%%%%%%%%%%%%%%%%%%
%%%%%%%%%%%%%%%%%%%%%%%%%%%%%%%%%%%%%%%%%%%%%%%%%%%%%%%%%%
%%%%%%%%%%%%%%%%%%%%%%%%%%%%%%%%%%%%%%%%%%%%%%%%%%%%%%%%%%

%\subsection{Semi-discrete Directed Polymers in a Brownian Environment}
\subsection{Semi-discrete Directed Polymers in a Brownian Environment and the Main Result}

%Consider the symplex $\Delta_{t,n}=\{0=t_0<t_1<\cdots < t_n=t\}$ and let $\{B^{(j)}:j\geq 1\}$ be an i.i.d. family of double-sided standard Brownian motions. For $\beta>0$, we define the partition function of the semi-discrete polymers in Brownian environment as
%\begin{eqnarray}
%	Z_{\beta}(t,n) = \int_{\Delta_{t,n}}  \exp\left\{ \beta \sum^n_{j=1} B^{(j)}(t_{j-1},t_j)\right\}dt_1\cdots dt_{n-1},
%\end{eqnarray}
%where $B^{(j)}(u,v)=B^{(j)}(v)-B^{(j)}(u)$.
%
%We will be mainly concerned with a \textit{stationary} version of the model: let $B$ be a double-sided standard Brownian motion, independent of $\{B^{(j)}:j\geq 1\}$ and define
%\begin{eqnarray}
%	Z_{\beta,\theta}(t,n) = 
%\end{eqnarray}

Polymer models are defined by specifying a path space and an environment. We will work exclusively with the model of Directed Polymers in a Brownian Environment introduced in \cite{OY}. In this case:

\begin{itemize}
	\item {\bf Polymer paths} are nondecreasing c\`adl\`ag paths  $x: [0,t]\to \bN$ with nearest-neighbor jumps,   $x(0)=1$, and $x(t)=n$. A path can be coded in terms of its jump times $0=s_0<s_1<\cdots<s_{n-1}<s_n=t$. 
  
\smallskip
  
	\item {\bf The environment} consists of a family of independent double-sided one-dimensional standard Brownian motions $\{B^{(j)}(\cdot): j\geq 1\}$ with  $B^{(j)}(0)=0$. 
\end{itemize}
At level $j$, the path  collects the increment $B^{(j)}(s_{j-1},s_j)=B^{(j)}(s_j)-B^{(j)}(s_{j-1})$. The partition function in a fixed  Brownian environment is  defined as
\begin{eqnarray} \nonumber
Z_{\beta}(t,n)&=&    \hspace{-15pt}
\int\limits_{0<s_{1}<\dotsm<s_{n-1}<t} \hspace{-15pt} \exp {\beta}\bigl[ B^{(1)}(0,s_1) +B^{(2)}(s_1,s_{2})
+\dotsm + B^{(n)}(s_{n-1},t)\bigr] \,ds_{1,n-1} .  
\end{eqnarray}
for $(n,t)\in\bN\times[0,\infty)$, where $ds_{1,n-1}$ is a short-hand notation for $ds_1\dotsm ds_{n-1}$.

We will be mainly concerned with a \textit{stationary} version of the model: enlarge the environment by adding another  Brownian motion $B^{(0)}$ independent of $\{B^{(j)}:\, j\geq 1\}$ and introduce a new parameter
$\theta\in(0,\infty)$. The stationary partition function is defined as
\begin{eqnarray*}
\begin{aligned}   Z_{\beta,\theta}(t,n) &=\hskip-20pt 
\int\limits_{-\infty<s_{0}<s_1<\dotsm<s_{n-1}<t}  \hskip-30pt \exp\bigl[ \beta B^{(0)}(s_0)+\theta s_0  +  \beta \left\{B^{(1)}(s_0,s_1) 
+\dotsm + B^{(n)}(s_{n-1},t)\right\}\bigr] \,ds_{0,n-1},
\end{aligned}\label{Zdef2}
\end{eqnarray*}
for $n\geq 1$ and $Z_{\beta,\theta}(t,0)=\exp\{\beta B^{(0)}(t)+\theta t\}$.
Here, stationarity refers to a specific property of the model highlighted in \cite{OY}: define
\begin{eqnarray*}
	u_{\beta,\theta}(t,j) = \log Z_{\beta,\theta}(t,j)-\log Z_{\beta,\theta}(t,j-1),\quad j\geq 1.
\end{eqnarray*}
Then, for all $t\geq 0$, $\{u_{\beta,\theta}(t,j):\, j\geq 1\}$ is an i.i.d. family with
\begin{eqnarray*}
	e^{-u_{\beta,\theta}(t,j)} \sim \beta^2\text{Gamma}(\beta^{-2}\theta).
\end{eqnarray*}
In other words, if we denote by $m_{\beta,\theta}$ the law of $-\log X-\log \beta^2$ where $X\sim\text{Gamma}(\beta^{-2}\theta)$ and $\mu_{\beta,\theta} = m_{\beta,\theta}^{\otimes \mathbb{N}}$, then $\mu_{\beta,\theta}$ is the stationary measure of the process $\{u_{\beta,\theta}(\cdot,j):\, j\geq 1\}$.

The processes $Z_{\beta,\theta}$, $h_{\beta,\theta}=\log Z_{\beta,\theta}$ and $u_{\beta,\theta}$ can be seen as semi-discrete stationary versions of the stationary SHE, KPZ and Burgers equations respectively. The link between $Z_{\beta,\theta}$ and the SHE can be seen as a rigorous version of the Feynman-Kac formula \eqref{eq:FK}. It turns out that these processes actually satisfy lattice versions of these equations. In the case of Burgers, an application of It\^o's formula shows that $u_j(\cdot)=u_{\beta,\theta}(\cdot,j)$ satisfies the system of stochastic differential equations
\begin{eqnarray*}
	du_j = \left( e^{-u_{j}}-e^{-u_{j-1}}\right)\, dt + \beta \left( dB_t^{(j)}-dB_t^{(j-1)}\right)
\end{eqnarray*}
A naive Taylor expansion suggests that $e^{-u_{j}}-e^{-u_{j-1}}\simeq (u_{j-1}-u_j) + \frac12 (u_j^2 - u_{j-1}^2)$. In an appropriate skew scaling, the discrete gradient $u_{j-1}-u_j$ becomes a second derivative while the difference of squares is reminiscent of the term $\partial_x u^2$ in \eqref{eq:Burgers}. We will show that this is indeed the case although it does not follow from such a simple argument. It is actually the core of our proof. The system above was already observed in \cite{S}. Note that complicated non-linearities leading to Burgers equation where considered in the works \cite{HQ,HX} and later in \cite{GP-HQ} in the context of energy solutions. The works \cite{HX} and \cite{GP-HQ} deal with general  examples of \textit{weakly asymmetric scaling}. Our work is an example of \textit{intermediate disorder scaling}. The reference \cite{HQ} deals with both settings.  One advantage of the energy solution approach is that it allows to consider models on the whole real line although only in equilibrium.

\vspace{1ex}

%%%%%%%%%%%%%%%%%%%%%%%%%%%%%%%%%%%%%%%%%%%%%%%%%%%%%%%%%%
%%%%%%%%%%%%%%%%%%%%%%%%%%%%%%%%%%%%%%%%%%%%%%%%%%%%%%%%%%
We can state our result: 
denote $u_{\beta,\theta}$ by $u^n$ whenever $\beta=n^{-1/4}$ and $\theta = 1 + \frac{1}{2\sqrt{n}}$ with stationary initial condition. 
We fix once for all an increasing diverging sequence $(a_n)_n$ such that $\displaystyle \lim_{n\to\infty} \frac{a_n}{\sqrt{n}}=0$.
Let $\rho_n:=\esp[u^n]$ and define the fluctuation field $\fieldn$ acting on test functions by
\begin{eqnarray*}
	\fieldn_t(\varphi) = \sum_j (u^n(tn,j)-\rho_n) \varphi(\tfrac{j-nt-a_n \sqrt{n}}{\sqrt{n}}).
\end{eqnarray*}

%%%%%%%%%%%%%%%%%%%%%%%%%%%%%%%%%%%%%%%%%%%%%%%%%%%%%%%%%%
%%%%%%%%%%%%%%%%%%%%%%%%%%%%%%%%%%%%%%%%%%%%%%%%%%%%%%%%%%
\begin{theorem}
The sequence of processes $(\field^n_t:\, t\in[0,T])_{n\geq 1}$ converges in distribution in $C([0,T],\mathcal{S}'(\re))$
to the unique energy solution of the Burgers equation
\begin{eqnarray}\label{eq:discrete-Burgers}
	\partial_t u = \frac12 \partial^2_x u + c \partial_x u -\frac{1}{2} \partial_x u^2 + \partial_x{\wh},
\end{eqnarray}
where $c\in\re$ is an explicit constant and $\wh$ is a space-time white noise.
\end{theorem}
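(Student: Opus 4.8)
The strategy is to follow the standard program for proving convergence to energy solutions of the Burgers equation, as developed in the works of Gon\c{c}alves--Jara and their collaborators: establish tightness of $(\field^n_t)_{n\ge 1}$ in $C([0,T],\mathcal{S}'(\re))$, show that every limit point satisfies the martingale formulation defining energy solutions of \eqref{eq:discrete-Burgers}, and then appeal to the uniqueness result for energy solutions on the whole line. Since we are in the stationary regime, at each fixed time the field $\field^n_t$ is (up to centering) a partial sum of the i.i.d.\ increments $u^n(tn,j)-\rho_n$ with law $m_{\beta,\theta}$, so the relevant moment bounds for tightness are readily obtained from elementary Gamma-distribution estimates together with the Kolmogorov--Chentsov criterion applied in an appropriate Sobolev space $\mathcal{H}_{-k}$. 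The starting point for the dynamics is the system of SDEs
\begin{eqnarray*}
	du_j = \left( e^{-u_{j}}-e^{-u_{j-1}}\right)\, dt + \beta \left( dB_t^{(j)}-dB_t^{(j-1)}\right),
\end{eqnarray*}
from which, after the skew scaling $j \mapsto (j-nt-a_n\sqrt n)/\sqrt n$, $t\mapsto tn$, an application of It\^o's formula to $\field^n_t(\varphi)$ produces a decomposition into: a martingale term whose bracket converges to the white-noise covariance $\|\partial_x\varphi\|_{L^2}^2 t$; a linear term that, after the naive Taylor expansion, yields $\tfrac12\partial_x^2 u$ plus the transport term $c\,\partial_x u$ (the constant $c$ emerging from the second-order correction in the expansion and the choice $\theta = 1+\tfrac1{2\sqrt n}$, together with the drift $a_n$); a quadratic term built from the differences $u_j^2-u_{j-1}^2$ that should converge to $-\tfrac12\partial_x u^2$; and an error term coming from the Taylor remainder $e^{-u_j}-e^{-u_{j-1}} - (u_{j-1}-u_j) - \tfrac12(u_j^2-u_{j-1}^2)$.

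\textbf{The main obstacle and how to handle it.} The heart of the matter — and, as the authors announce, the core of the proof — is controlling the quadratic and error terms, which requires a \emph{second-order Boltzmann--Gibbs principle}. The naive Taylor expansion is not licit because the fields involved are only distribution-valued in the limit; one must show that the local quadratic functionals $\sum_j (u_j^2 - \text{const})\,\psi(\tfrac{j-\cdots}{\sqrt n})$ can be replaced, at the cost of a vanishing error, by a genuine function of the density field $\field^n$, namely a smoothed square $\field^n(\iota_\ep)^2$ in the spirit of the quadratic variation term of an energy solution, and simultaneously that higher-order terms in the expansion of $e^{-u}$ are negligible. The standard route is a multiscale argument: replace $u_j^2$ first by a local average over a box of size $\ell$, estimating the cost by an $H_{-1}$-type bound that exploits the stationary product measure $\mu_{\beta,\theta}$ and a spectral-gap or equivalently a sharp $L^2$ estimate for the symmetric part of the generator of the $u$-dynamics; then let $\ell$ grow with $n$. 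A distinctive feature advertised in the abstract is that the authors \emph{avoid} spectral gap estimates for the discrete model — presumably they exploit instead the explicit i.i.d.\ product structure of the invariant measure and duality/chaos tools available for this integrable model, or a direct second-moment computation using independence of the $u_j$'s, to obtain the needed variance bound on the additive functional of the quadratic field. Carrying this through with the correct powers of $n$ under the intermediate-disorder scaling $\beta = n^{-1/4}$ is the delicate point.

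\textbf{Assembling the proof.} Granting the Boltzmann--Gibbs estimate, the remaining steps are routine: (i) identify the limiting martingale and its bracket, using the stationarity to compute the limiting quadratic variation of the It\^o integral against $\varphi$ and recognizing it as that of $\partial_x\wh$; (ii) pass to the limit in the linear term, where the discrete Laplacian under the skew scaling converges to $\tfrac12\partial_x^2$ and the remaining drift assembles into $c\,\partial_x u$, with the explicit value of $c$ read off from the expansion of $\theta$ and the shift $a_n$; (iii) use the Boltzmann--Gibbs principle to show the quadratic term converges to $-\tfrac12\partial_x u^2$ in the sense required by the energy-solution formulation (i.e.\ as an $L^2$-limit of the regularized quadratic field), and to discard the Taylor remainder; (iv) verify the time-reversal condition for energy solutions, again using stationarity of $\mu_{\beta,\theta}$; and finally (v) invoke the uniqueness theorem for energy solutions of the Burgers equation on $\re$ (as in \cite{GP-uniqueness}) to conclude that the full sequence converges to the unique such solution. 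I expect steps (i), (ii), (iv), (v) to be essentially bookkeeping once the scaling constants are pinned down, and essentially all the probabilistic content to be concentrated in step (iii).
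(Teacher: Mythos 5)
Your high-level plan (tightness, martingale identification, energy conditions, uniqueness via \cite{GP-uniqueness}) is indeed the skeleton of the paper's argument, but the two places where you defer to "standard" or "presumable" arguments are precisely where the paper's actual content lies, and your description of them is not correct. First, the nonlinearity: you treat the problem as extracting $u_j^2-u_{j-1}^2$ from a naive Taylor expansion of $e^{-u_j}-e^{-u_{j-1}}$ and "discarding the Taylor remainder." The paper points out explicitly that this fails: the cubic terms in the expansion are \emph{not} negligible on the basis of moment bounds alone (only order four and higher can be discarded by $L^2$ estimates, since $\esp_n[|u|^k]\sim n^{-k/4}$ and the prefactor is $\sqrt{n}$). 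The authors instead perform an iterated re-expansion writing $W_j-u_j$ (with $W_j=1-e^{-u_j}$) in terms of the shifted product $W_{j-1}W_j$, a cubic term $W_{j-1}u_j^2$, and generator terms $Lu_j^2$, $Lu_j^3$; the generator terms are fluctuation terms killed by the Kipnis--Varadhan bound, and the cubic terms require a separate second-order Boltzmann--Gibbs principle for third-order monomials (the entire Appendix), which only contributes transport terms. The shift of index to $W_{j-1}W_j$ is not cosmetic: it is what makes the one-block estimate applicable, since that estimate needs a product $g_j(\bW_j-\overrightarrow{W}^l_j)$ with $g$ depending on a coordinate outside the averaging block.

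Second, the substitute for the spectral gap, which you leave as speculation ("duality/chaos tools, or a direct second-moment computation using independence"), is neither of these, and a direct $L^2$ computation cannot work for the quadratic term (that is exactly why a Boltzmann--Gibbs principle is needed). The mechanism is the Gaussian-type integration-by-parts identity for the log-Gamma product measure,
\begin{equation*}
\int (\bW_{j+1}-\bW_j)\, f\, d\mu_n \;=\; \beta^2 \int (\partial_{j+1}-\partial_j) f \, d\mu_n ,
\end{equation*}
inserted into the Kipnis--Varadhan variational formula for the $\|\cdot\|_{-1,n}$ norm: after Young's inequality the Dirichlet-form term is absorbed and one gets the one-block and second-order Boltzmann--Gibbs estimates in a single step, with no multiscale/two-block or renormalization scheme and no spectral gap. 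Without identifying this identity (and the accompanying algebra that produces the block-average square $Q(l,\cdot)$, whose limit yields the regularized quadratic field entering condition (EC2)), the core estimate of the proof is missing, so the proposal as written does not close the argument even in outline.
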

%%%%%%%%%%%%%%%%%%%%%%%%%%%%%%%%%%%%%%%%%%%%%%%%%%%%%%%%%%
%%%%%%%%%%%%%%%%%%%%%%%%%%%%%%%%%%%%%%%%%%%%%%%%%%%%%%%%%%
\begin{rk}\rm
	It is reasonable to expect a non-trivial transport term as there are several sources of asymmetry in the model. First, our scaling was meant to properly normalize the partition function. As such, its logarithm is slightly off-center. Second, the direction $(1,1)$ is not exactly the characteristic direction or, equivalently, to force it to be characteristic, a more careful choice of constants has to be made (see \cite{SV,MSV}). Of course, both settings are asymptotically equivalent. Finally, this model can be seen as a system of coupled diffusions in the highly non-symmetric potential  
	$V(u)=e^{-u}-1+u$. This is another source of asymmetry. The precise value of $c$ can be obtained by careful book-keeping along the proof. We found it to be $-\frac{9}{10}$.  In any case, this transport term can be removed by a change of coordinates in the equation and, at the discrete level, with a more careful centering of the test functions.
\end{rk}

\begin{rk}\rm
	The sequence $(a_n)_n$ is introduced to deal with the fact that the discrete model is defined only for $j\geq 1$. For any compactly supported test function, the fluctuation field will then be well defined for $n$ large enough. As the system is stationary, this correction is harmless.
\end{rk}

\begin{rk}\rm
	We note that the result above can be easily generalized to systems of SDEs of the type
	\begin{eqnarray*}
	du_j = \left( V'(u_{j-1})-V'(u_j)\right)\, dt + \beta \left( dB_t^{(j)}-dB_t^{(j-1)}\right),
\end{eqnarray*}
where $V$ is a real-valued function which is quadratic at $0$ and has appropriate growth at $\pm \infty$, provided the dynamics above can be properly defined. The existence of the dynamics is a difficult question (see \cite{MO} for results in this direction). In our case (where $V(u)=e^{-u}-1+u$), the interpretation of the system \eqref{eq:discrete-Burgers} in terms of directed polymers settles the issue but, in general, this connection is lost. We will not consider such a general framework in this article.
\end{rk}

%%%%%%%%%%%%%%%%%%%%%%%%%%%%%%%%%%%%%%%%%%%%%%%%%%%%%%%%%%
%%%%%%%%%%%%%%%%%%%%%%%%%%%%%%%%%%%%%%%%%%%%%%%%%%%%%%%%%%
%%%%%%%%%%%%%%%%%%%%%%%%%%%%%%%%%%%%%%%%%%%%%%%%%%%%%%%%%%
%%%%%%%%%%%%%%%%%%%%%%%%%%%%%%%%%%%%%%%%%%%%%%%%%%%%%%%%%%
\subsection{Structure of the Article}

In Section 2, we recall the notion of energy solutions of Burgers equation. In Section 3, we carefully state the system of SDEs satisfied by the model, identify its different components and give a martingale interpretation. In Section 4, we present some useful estimates on the  moments of the discrete model. In Section 5, we prove the dynamical estimates which are the core of our proof. In particular, we prove the second order Boltzmann-Gibbs principle in Section 5.2. In Section 6, we prove the tightness of the fluctuation field and identify its limit in Section 7. The Appendix contains additional estimates needed in Sections 6 and 7.

%%%%%%%%%%%%%%%%%%%%%%%%%%%%%%%%%%%%%%%%%%%%%%%%%%%%%%%%%%
%%%%%%%%%%%%%%%%%%%%%%%%%%%%%%%%%%%%%%%%%%%%%%%%%%%%%%%%%%
%%%%%%%%%%%%%%%%%%%%%%%%%%%%%%%%%%%%%%%%%%%%%%%%%%%%%%%%%%
%%%%%%%%%%%%%%%%%%%%%%%%%%%%%%%%%%%%%%%%%%%%%%%%%%%%%%%%%%

\subsection{General Notations}
%For sequences $(\varphi_j)_j$, we denote
%\begin{eqnarray}
%	\grad \varphi_j &=& \varphi_{j+1}-\varphi_j,\\
%	\lapl\varphi_j &=& \varphi_{j+1}+\varphi_{j-1}-2\varphi_j,
%\end{eqnarray}
%(CHECK!) and, for $n\geq 1$, and a test function $\varphi$,
Recall that we fixed an increasing diverging sequence $(a_n)_n$ such that $\displaystyle \lim_{n\to\infty} \frac{a_n}{\sqrt{n}}=0$.
For test functions $\varphi$, we define
\begin{eqnarray*}
	\varphi^n_j &=& \varphi(\tfrac{j-nt-a_n\sqrt{n}}{\sqrt{n}}),\\
	\gradn \varphi^n_j &=& \frac{\sqrt{n}}{2}(\varphi^n_{j+1}-\varphi^n_{j-1})\\
	\lapln\varphi^n_j &=& \frac{n}{2} (\varphi^n_{j+1}+\varphi^n_{j-1}-2\varphi^n_j).
\end{eqnarray*}
Note that, even though the discretization depends on the value of $t$, we remove it from the notation as no confusion will arise.
For sequences $(\varphi_j)_j$ (resp. test functions $\varphi$), we define
\begin{eqnarray*}
	\mathcal{E}_n(\varphi) = \frac{1}{\sqrt{n}} \sum_j \varphi_j^2,\quad \mathcal{E}(\varphi) = \int \varphi^2(x)\, dx.
\end{eqnarray*} 
We denote the law of the stationary process $u^n$ by $\p_n$ and expected value with respect to $\p_n$ by $\esp_n$. 
As such, $u^n$ will be simply denoted by $u$, which can be seen as the canonical process under the law $\p_n$.
Note that, in this context, $\beta^{-2}\theta = \sqrt{n}+1/2$. We denote by $m_n$ the law of $-\log X+\tfrac12 \log n$ where $X\sim\text{Gamma}(\sqrt{n}+1/2)$ and $\mu_n = m_n^{\otimes \mathbb{N}}$ which, according to the previous discussion, turns out to be the stationary measure for $u$. 
As usual, $C$ denotes a positive constant whose value may change from line to line.

%%%%%%%%%%%%%%%%%%%%%%%%%%%%%%%%%%%%%%%%%%%%%%%%%%%%%%%%%%
%%%%%%%%%%%%%%%%%%%%%%%%%%%%%%%%%%%%%%%%%%%%%%%%%%%%%%%%%%
%%%%%%%%%%%%%%%%%%%%%%%%%%%%%%%%%%%%%%%%%%%%%%%%%%%%%%%%%%
%%%%%%%%%%%%%%%%%%%%%%%%%%%%%%%%%%%%%%%%%%%%%%%%%%%%%%%%%%
%%%%%%%%%%%%%%%%%%%%%%%%%%%%%%%%%%%%%%%%%%%%%%%%%%%%%%%%%%
%%%%%%%%%%%%%%%%%%%%%%%%%%%%%%%%%%%%%%%%%%%%%%%%%%%%%%%%%%

\section{Energy solutions of the Burgers equation}\label{sec:energy}
We will present the basics of the theory of  energy solutions of the stochastic Burgers equation as it was introduced in \cite{GJ-arma} and further developed in \cite{GuJ, GP-uniqueness} (see also \cite{GJS,GP-noneq, GP-generator}). Recall we are concerned with the equation
\begin{eqnarray}\label{eq:Burgers-2}
	\partial_t u = \frac12 \partial^2_x u + c \partial_x u -\frac{1}{2} \partial_x u^2 + \partial_x{\wh},
\end{eqnarray} 
where $\wh$ is a space-time white noise, i.e.~a distribution-valued centered Gaussian process with covariance $\esp[\wh(t,x)\wh(s,y)]=\delta(t-s)\delta(y-x)$. More precisely, $\wh$ acts on $L^2(\re_+ \times \re)$ in such a way that the random variables $\{\wh(f):\,f\in L^2(\re_+ \times \re)\}$ are jointly Gaussian with covariance
\begin{eqnarray*}
	\esp\left[ \wh(f_1)\wh(f_2)\right] = \int_{\re_+\times \re} f_1(t,x)f_2(t,x)\, dxdt,\quad f_1,f_2 \in L^2(\re_+ \times \re).
\end{eqnarray*}
 Due to the singularity of the noise, solutions to \eqref{eq:Burgers-2} can only be expected to be distribution-valued in space. The main difficulty then consists in giving a consistent meaning to the term $\partial_x u^2$. As we will see below, it is possible to make sense to this expression as a space-time distribution.

We start with a definition:
\begin{definition}
	We say that a process $\{u_t:\, t\in[0,T]\}$ satisfies condition (S) if, for all $t\in[0,T]$, the $\mathcal{S}'(\re)$-valued random variable $u_t$ is a white noise of variance $1$.
\end{definition}

For a process $\{u_t:\, t\in[0,T]\}$ satisfying condition (S), $0\leq s<t\leq T$, $\varphi\in\mathcal{S}(\re)$  and $\varepsilon>0$, we define
\begin{eqnarray*}
	\mathcal{A}^{\varepsilon}_{s,t}(\varphi) = \int^t_s \int_{\re} u_r(i_{\varepsilon}(x))^2 \partial_x\varphi(x)dxdr
\end{eqnarray*}
where $i_{\varepsilon}(x)=\varepsilon^{-1}{\bf 1}_{(x,x+\varepsilon]}$.

\begin{definition}
	Let $\{u_t:\, t\in[0,T]\}$ be a process satisfying condition (S). We say that $\{u_t:\, t\in[0,T]\}$ satisfies the energy estimate if there exists a constant $\kappa>0$ such that:
	
	\vspace{1ex}
	
	\noindent (EC1) For any $\varphi\in\mathcal{S}(\re)$ and any $0\leq s<t\leq T$,
							\begin{eqnarray*}
								\esp\left[ \left| \int^t_s u_r(\partial_x^2\varphi)\,dr\right|^2\right] \leq \kappa (t-s)\cale(\partial_x \varphi),
							\end{eqnarray*}

	\noindent (EC2) For any $\varphi\in\mathcal{S}(\re)$, any $0\leq s<t\leq T$ and any $0<\delta<\varepsilon <1$,
							\begin{eqnarray*}
								\esp\left[ \left| \mathcal{A}^{\varepsilon}_{s,t}(\varphi)-\mathcal{A}^{\delta}_{s,t}(\varphi) \right|^2\right] \leq \kappa (t-s)\varepsilon\cale(\partial_x \varphi).
							\end{eqnarray*}
\end{definition}
We state a key theorem proved in \cite{GJ-arma} which allows to give a sense to the quadratic term in \eqref{eq:Burgers-2}:
\begin{theorem}\label{thm:S-EC-imply}
	Assume $\{u_t:\, t\in[0,T]\}$ satisfies (S) and (EC2).
	Then, there exists an $\mathcal{S}'(\re)$-valued stochastic process $\{\mathcal{A}_t:\, t\in[0,T]\}$ with continuous paths such that
	\begin{eqnarray*}
		\mathcal{A}_t(\varphi) = \lim_{\varepsilon\to0}\mathcal{A}^{\varepsilon}_{0,t}(\varphi),
	\end{eqnarray*}
	in $L^2$, for any $t\in[0,T]$ and $\varphi\in\mathcal{S}(\re)$.
\end{theorem}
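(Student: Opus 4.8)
The plan is to follow the standard $L^2$-Cauchy argument that underlies this kind of result. Fix $\varphi\in\mathcal{S}(\re)$ and $t\in[0,T]$. First I would observe that the family $\{\mathcal{A}^{\varepsilon}_{0,t}(\varphi)\}_{\varepsilon>0}$ is Cauchy in $L^2(\bP)$ as $\varepsilon\to 0$: given $0<\delta<\varepsilon<1$, (EC2) applied with $s=0$ gives
\begin{eqnarray*}
	\esp\left[\left|\mathcal{A}^{\varepsilon}_{0,t}(\varphi)-\mathcal{A}^{\delta}_{0,t}(\varphi)\right|^2\right] \leq \kappa\, t\, \varepsilon\, \cale(\partial_x\varphi),
\end{eqnarray*}
which tends to $0$ as $\varepsilon\to0$, uniformly in $\delta<\varepsilon$. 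Hence, along the dyadic sequence $\varepsilon_k=2^{-k}$, the random variables $\mathcal{A}^{\varepsilon_k}_{0,t}(\varphi)$ form an $L^2$-Cauchy sequence, so they converge in $L^2$ to a limit which I will call $\mathcal{A}_t(\varphi)$; a second application of the same bound shows the full net $\mathcal{A}^{\varepsilon}_{0,t}(\varphi)$ converges to the same limit and that the limit does not depend on the chosen sequence. Linearity of $\varphi\mapsto\mathcal{A}^{\varepsilon}_{0,t}(\varphi)$ passes to the limit, so $\varphi\mapsto\mathcal{A}_t(\varphi)$ is linear.

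Next I would upgrade this pointwise-in-$(t,\varphi)$ construction to an $\mathcal{S}'(\re)$-valued process with continuous paths. For the $\mathcal{S}'$-valued statement, the cleanest route is to note that $\mathcal{A}^{\varepsilon}_{0,t}$ is, for each $\varepsilon$, a genuine $\mathcal{S}'$-valued (indeed continuous-in-$t$) process, and to control the modulus of continuity of $t\mapsto\mathcal{A}_t(\varphi)$ together with a continuity/boundedness estimate in $\varphi$ in some Sobolev norm $\|\cdot\|_{-p}$. The bound in $\varphi$ comes from (EC2) again by letting $\delta\to 0$, which yields $\esp[|\mathcal{A}_t(\varphi)-\mathcal{A}^{\varepsilon}_{0,t}(\varphi)|^2]\leq \kappa t\varepsilon\cale(\partial_x\varphi)$, combined with an elementary estimate on $\mathcal{A}^{\varepsilon}_{0,t}(\varphi)$ itself using condition (S): since $u_r(i_\varepsilon(x))$ is for fixed $r$ a centered Gaussian with variance $\varepsilon^{-1}$, one gets $\esp[|\mathcal{A}^{\varepsilon}_{0,t}(\varphi)|^2]\lesssim t^2\varepsilon^{-2}\|\partial_x\varphi\|_{L^1}^2$ or a similar crude bound, and choosing $\varepsilon$ as a fixed small number turns this into a bound $\esp[|\mathcal{A}_t(\varphi)|^2]\leq C(t)\,\|\varphi\|_{H^p}^2$ for a suitable $p$. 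This shows $\varphi\mapsto\mathcal{A}_t(\varphi)$ extends to a bounded linear functional on $H^p$, i.e.\ defines an element of $\mathcal{S}'(\re)$, and the map $t\mapsto\mathcal{A}_t$ is measurable.

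For the continuity of paths I would derive a two-parameter estimate of the form $\esp[|\mathcal{A}_t(\varphi)-\mathcal{A}_s(\varphi)|^2]\leq C|t-s|\,\cale(\partial_x\varphi)$ — which follows by the same Cauchy argument starting from (EC2) with general $s<t$, plus a bound on $\esp[|\mathcal{A}^{\varepsilon}_{s,t}(\varphi)|^2]$ that is linear in $t-s$ obtained from (S) — and then apply a Kolmogorov-type continuity criterion in the space variable (using higher moments, available since everything here is Gaussian or a limit of polynomials in Gaussians, hence enjoys hypercontractive moment bounds) to promote $L^2$-continuity in $t$ to almost-sure continuity of $t\mapsto\mathcal{A}_t$ as a path in $\mathcal{S}'(\re)$. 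I expect the genuinely delicate point to be exactly this last step: making the passage from the scalar limits $\mathcal{A}_t(\varphi)$, which exist separately for each $\varphi$ on its own null set, to a \emph{single} $\mathcal{S}'$-valued continuous process — this requires a countable-dense-set argument in $\mathcal{S}(\re)$ together with uniform-in-$\varphi$ quantitative control, and care that the exceptional null sets can be chosen independent of $\varphi$ and $t$. Everything else is a routine consequence of (EC2), the Gaussianity from (S), and standard Kolmogorov continuity machinery.
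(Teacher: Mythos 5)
First, a point of comparison: the paper itself gives no proof of this theorem --- it is quoted from \cite{GJ-arma} --- so your attempt has to be measured against the standard argument there. Your $L^2$ part is exactly that argument and is fine: (EC2) with $s=0$ makes $\{\mathcal{A}^{\varepsilon}_{0,t}(\varphi)\}_{\varepsilon}$ Cauchy in $L^2$, the limit is linear in $\varphi$, and the upgrade to a single $\mathcal{S}'(\re)$-valued object via countably many test functions and a Sobolev/Mitoma-type argument is routine.

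The genuine gap is in your path-continuity step. The two-parameter bound you aim for, $\esp[|\mathcal{A}_t(\varphi)-\mathcal{A}_s(\varphi)|^2]\leq C|t-s|\,\cale(\partial_x\varphi)$, is useless for Kolmogorov's criterion: with second moments one needs an exponent in $|t-s|$ strictly larger than $1$. And the device you invoke to get higher moments --- hypercontractivity, ``since everything is Gaussian or a limit of polynomials in Gaussians'' --- is not available under the hypotheses: condition (S) only fixes the one-time marginals of $u$ to be white noise; the joint law in time is not Gaussian (for the Burgers solution it certainly is not), so $\mathcal{A}^{\varepsilon}_{s,t}(\varphi)$ does not sit in a fixed Wiener chaos and no hypercontractive bound follows from (S) and (EC2) alone. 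The correct route stays with second moments and gets a superlinear exponent by optimizing over $\varepsilon$: write $\mathcal{A}_t(\varphi)-\mathcal{A}_s(\varphi)=\bigl(\mathcal{A}_t(\varphi)-\mathcal{A}_s(\varphi)-\mathcal{A}^{\varepsilon}_{s,t}(\varphi)\bigr)+\mathcal{A}^{\varepsilon}_{s,t}(\varphi)$; letting $\delta\to0$ in (EC2) bounds the first term by $\kappa(t-s)\varepsilon\,\cale(\partial_x\varphi)$, while (S), Cauchy--Schwarz in time and the fact that $\int\partial_x\varphi\,dx=0$ give $\esp[|\mathcal{A}^{\varepsilon}_{s,t}(\varphi)|^2]\leq C(t-s)^2\varepsilon^{-1}\cale(\partial_x\varphi)$ (even your cruder $\varepsilon^{-2}$ bound would do). Choosing $\varepsilon\sim(t-s)^{1/2}$ yields $\esp[|\mathcal{A}_t(\varphi)-\mathcal{A}_s(\varphi)|^2]\leq C(t-s)^{3/2}\cale(\partial_x\varphi)$, and Kolmogorov--Centsov with exponent $2$ and gap $1/2$ gives the continuous modification, with no moments beyond the second and no Gaussianity of the time-dynamics needed.
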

We are now ready to formulate the definition of an energy solution:

\begin{definition}\label{def:ES}
	We say that $\{u_t:\, t\in[0,T]\}$ is a stationary energy solution of the stochastic Burgers equation \eqref{eq:Burgers-2} if
	\begin{enumerate}[label=\arabic*.-]
		\item $\{u_t:\, t\in[0,T]\}$ satisfies (S), (EC1) and (EC2).
		
		\item For all $\varphi\in\mathcal{S}(\re)$, the process
				\begin{eqnarray*}
					u_t(\varphi)-u_0(\varphi)-\tfrac12 \int^t_0 u_s(\partial_x^2 \varphi)\,ds
					- c \int^t_0 u_s(\partial_x \varphi)\, ds					
					- \mathcal{A}_t(\varphi)
				\end{eqnarray*}
				is a martingale with quadratic variation $t\cale(\partial_x\varphi)$, where $\mathcal{A}$ is the process from Theorem \ref{thm:S-EC-imply}.
				
		\item For all $\varphi\in\mathcal{S}(\re)$, the process
				\begin{eqnarray*}
					u_{T-t}(\varphi)-u_T(\varphi)-\tfrac12 \int^t_0 u_{T-s}(\partial_x^2 \varphi)\,ds
					+ c \int^t_0 u_{T-s}(\partial_x \varphi)\, ds					
					+ \mathcal{A}_T(\varphi)
					- \mathcal{A}_{T-t}(\varphi)
				\end{eqnarray*}
				is a martingale with quadratic variation $t\cale(\partial_x\varphi)$.
	\end{enumerate}
\end{definition}
Existence of energy solutions was proved in \cite{GJ-arma}. Uniqueness was proved in \cite{GP-uniqueness}.

%%%%%%%%%%%%%%%%%%%%%%%%%%%%%%%%%%%%%%%%%%%%%%%%%%%%%%%%%%
%%%%%%%%%%%%%%%%%%%%%%%%%%%%%%%%%%%%%%%%%%%%%%%%%%%%%%%%%%
%%%%%%%%%%%%%%%%%%%%%%%%%%%%%%%%%%%%%%%%%%%%%%%%%%%%%%%%%%
%%%%%%%%%%%%%%%%%%%%%%%%%%%%%%%%%%%%%%%%%%%%%%%%%%%%%%%%%%
%%%%%%%%%%%%%%%%%%%%%%%%%%%%%%%%%%%%%%%%%%%%%%%%%%%%%%%%%%
%%%%%%%%%%%%%%%%%%%%%%%%%%%%%%%%%%%%%%%%%%%%%%%%%%%%%%%%%%

\section{System of SDEs and the Martingale Decomposition}

%Let us simply denote $u_j(\cdot)=u_{\beta,\theta}(\cdot,j)$ as no confusion will arise in the following.
An application of It\^o's formula shows that, under $\p_n$, the collection $\{u_j:\, j\geq 1\}$ satisfies the system of SDEs:
\begin{eqnarray*}
	du_j &=& \left( W_{j-1} - W_j\right)\, dt + \beta \left( dB^{(j)}_t - dB^{(j-1)}_t\right), \quad j\geq 2,\\
	du_1 &=& (-\frac{\beta^2}{2}-W_1)\, dt + \beta \left( dB^{(1)}_t - dB^{(0)}_t\right),
\end{eqnarray*}
where $W_j = 1-e^{-u_j}$ and $\beta=n^{-1/4}$. As it will be noticed later, $\esp_n[W_j]=-\beta^2/2$. Writing $\bW_j=W_j-\esp_n[W_j]$ and setting $\bW_0=0$, the system above can be summarized as
\begin{eqnarray*}
	du_j &=& \left( \bW_{j-1} - \bW_j\right)\, dt + \beta \left( dB^{(j)}_t - dB^{(j-1)}_t\right), \quad j\geq 1.
\end{eqnarray*}
The initial condition is taken as
\begin{eqnarray*}
	u_j(0)=-\log X_j + \frac{1}{2}\log n,\quad j\geq 1,
\end{eqnarray*}
where $(X_j)_j$ is an i.i.d. family of $\text{Gamma}(\sqrt{n}+1/2)$ random variables.
Hence, the generator of this dynamics acts on smooth cylindrical functions as
\begin{eqnarray*}
	L &=& \frac{\beta^2}{2} \sum_j (\partial_j-\partial_{j-1})^2 + \sum_j (\bW_{j-1}-\bW_j) \partial_j
					%+(-\frac{\beta^2}{2}-W_1)\partial_1
					\\
		&=&\frac{\beta^2}{2} \sum_j (\partial_j-\partial_{j-1})^2 + \sum_j \bW_j (\partial_{j+1}-\partial_{j}),
					%+(-\frac{\beta^2}{2}-W_1)\partial_1,
\end{eqnarray*}
where $\partial_j = \partial/\partial_{u_j}$. %As we will deal with compactly supported test functions, we will systematically disregard the contribution coming from the boundary term.
Remembering the definition of the density field
\begin{eqnarray*}
	\fieldn_t(\varphi) = \sum_j (u_j(tn)-\rho_n) \varphi^n_j, %\varphi(\tfrac{j-nt}{\sqrt{n}}),
\end{eqnarray*}
Dynkin's formula implies that
\begin{eqnarray*}
	\mathcal{M}^n_t(\varphi)
	=
	\fieldn_t(\varphi)-\fieldn_0(\varphi) - \int^t_0 \left( \partial_s + nL \right) \fieldn_s(\varphi)\, ds
	=
	\beta \int^t_0\sum_j \gradn \varphi^n_j dB^{(j)}_s
\end{eqnarray*}
is a martingale with quadratic variation
\begin{eqnarray*}
	\langle \mathcal{M}^n(\varphi)\rangle_t 
	=
	\frac{\beta^2 }{2} \int^t_0\sum_j (\gradn \varphi^n_j)^2ds.
%	=
%	\frac{t}{2} \mathcal{E}_n(\gradn \varphi^n).
\end{eqnarray*}
Note that the time integral cannot be removed as the discretization of $\varphi$ depends on time. 
By integration-by-parts, we can formally obtain $L^*$, the adjoint of $L$ in $L^2(\mu_n)$:
\begin{eqnarray*}
	L^*= \frac{\beta^2}{2} \sum_j (\partial_j-\partial_{j-1})^2 - \sum_j \bW_j (\partial_{j+1}-\partial_{j}).
\end{eqnarray*}
This allows us to identify the symmetric and anti-symmetric parts of the generator:
\begin{eqnarray*}
	&&S = \frac{L+L^*}{2} 
	= \sum_j\left\{ \frac{\beta^2}{2}(\partial_{j+1}-\partial_j)^2-\frac12 \bW_j\,(\partial_{j+1}+\partial_{j-1}-2\partial_j)\right\} 
	\\
	&&A = \frac{L-L^*}{2}
	= \frac12 \sum_j \bW_j\, (\partial_{j+1}-\partial_{j-1}).
\end{eqnarray*}
With this at hands, we can properly decompose the dynamics: remembering $\beta=n^{-1/4}$, the symmetric part corresponds to
\begin{eqnarray*}	
	\spart^n_t(\varphi)
	=
	\int^t_0 n S \fieldn_s(\varphi)\, ds
	=
	\frac12 \int^t_0 \sum_j \bW_j(sn) \lapln \varphi^n_j ds
\end{eqnarray*}
while the anti-symmetric part corresponds to
\begin{eqnarray*}
	\apart^n_t(\varphi)
	&=&
	\int^t_0 \left( 
		\partial_s + nA
	\right)
	\fieldn_s(\varphi)\, ds\\
	&=&
	\int^t_0 \sqrt{n}\sum_j \left\{ \bW_j(sn)\gradn \varphi^n_j - (u_j(sn)-\rho_n)\partial_x \varphi^n_j\right\}ds.
%	&=&
%	\int^t_0 \sqrt{n}\sum_j \left( W^n_j(sn)-u^n_j(sn)\right)\gradn \varphi^n_j ds
%	+ E^n_t(\varphi)
\end{eqnarray*}
%where
%\begin{eqnarray}
%	E^n_t(\varphi) = \int^t_0 \sqrt{n} \sum_j \left(u^n_j(sn)-\rho_n\right)\left(\gradn \varphi^n_j-\partial_x \varphi^n_j\right)\,ds.
%\end{eqnarray}
%Using $\esp^n[|u^n_j(sn)-\rho_n|^2]=O(n^{-1/2})$ (see Section XXX),
%\begin{eqnarray}
%	\esp^n\left[ \sup_{t\leq T}\left|E^n_t(\varphi) \right|^2\right]
%	&\leq&
%	n \esp^n\left[ \sup_{t\leq T}
%	\int^t_0 \sum_j
%	\left| u^n_j(sn)-\rho_n\right|^2
%	\left|\gradn \varphi^n_j-\partial_x \varphi^n_j \right|^2\right]
%	\,ds\\
%	&\leq&
%	n \int^T_0 \sum_j
%	\esp\left[ \left| u^n_j(sn)-\rho_n\right|^2 \right]
%	\left|\gradn \varphi^n_j-\partial_x \varphi^n_j \right|^2
%	\,ds\\
%	&\leq&
%	C \frac{T^2}{n},
%\end{eqnarray}
%by a simple Taylor expansion.
%%%%%%%%%%%%%%%%%%%%%%%%%%%%%%%%%%%%%%%%%%%%%%%%%%%%%%%%%%
%%%%%%%%%%%%%%%%%%%%%%%%%%%%%%%%%%%%%%%%%%%%%%%%%%%%%%%%%%
%%%%%%%%%%%%%%%%%%%%%%%%%%%%%%%%%%%%%%%%%%%%%%%%%%%%%%%%%%
%%%%%%%%%%%%%%%%%%%%%%%%%%%%%%%%%%%%%%%%%%%%%%%%%%%%%%%%%%
%%%%%%%%%%%%%%%%%%%%%%%%%%%%%%%%%%%%%%%%%%%%%%%%%%%%%%%%%%
%%%%%%%%%%%%%%%%%%%%%%%%%%%%%%%%%%%%%%%%%%%%%%%%%%%%%%%%%%

\section{Static Estimates}
We briefly recall some facts about the Gamma and log-Gamma distributions.
If $X\sim \text{Gamma}(\nu)$, then
\begin{eqnarray*}
	\p\left[ X \geq x \right] &=& \frac{1}{\Gamma(\nu)} \int^{\infty}_x y^{\nu-1}e^{-y}dy,
\end{eqnarray*}
where $\Gamma(\nu)=\int^{\infty}_0 y^{\nu-1}e^{-y}dy$ is the Gamma function. By explicit computations,
\begin{eqnarray*}
	\esp[X] = \nu \quad \text{and} \quad \Vvv[X] = \nu.
	%\esp[X^{-1}] = \frac{1}{\nu-1},&& \Vvv[X^{-1}] = \frac{1}{(\nu-1)(\nu-2)},\\
	%\esp[\log X] = \Psi_0(\nu),&& \Vvv[\log X] = \Psi_1(\nu),
\end{eqnarray*}
Now, if we take $\beta=n^{-1/4}$, $\theta=1+1/(2\sqrt{n})$ and let $\nu= \beta^{-2}\theta=\sqrt{n}+1/2$, we obtain
\begin{eqnarray*}
	\esp_n[W] &=& \esp_n[1-e^{-u}] = -\frac{1}{2\sqrt{n}} = -\frac{\beta^2}{2},\\
	\Vvv_n[W] &=& \Vvv_n[e^{-u}] = \frac{1}{\sqrt{n}} + \frac{1}{2n}=\beta^2\theta,
\end{eqnarray*}
as, under $\p_n$, $e^{-u}\sim \beta^2X$ with $X\sim \text{Gamma}(\sqrt{n}+1/2)$. Here, $\Vvv_n$ denotes the variance with respect to $\p_n$. On the other hand, for $X\sim \text{Gamma}(\sqrt{n}+1/2)$,
\begin{eqnarray*}
	\p\left[ -\log X -\log \beta^2 \geq x \right]
	=
	\frac{1}{\beta^{2\nu}\Gamma(\nu)} \int^{\infty}_x e^{-\nu y - \beta^{-2}e^{-y}}dy,
\end{eqnarray*}
from where we can compute
\begin{eqnarray*}
	\esp_n\left[ u \right] = -\Psi_0(\sqrt{n}+\tfrac12) \quad \text{and} \quad
	\Vvv_n\left[ u \right] = \Psi_1(\sqrt{n}+\tfrac12),
\end{eqnarray*}
with $\Psi_0=\Gamma'/\Gamma$ and $\Psi_1=\Psi_0'$. Asymptotics of these functions are known \cite{AS}:
\begin{eqnarray*}
	\Psi_0(x) = \log x - \frac{1}{2x} + O\left(\frac{1}{x^2}\right),\quad \Psi_1(x) = O\left(\frac{1}{x}\right),\quad \text{as} \quad x\to\infty.
\end{eqnarray*}
From this, we conclude that
\begin{eqnarray*}
	\esp_n[u]=-\frac{1}{2\sqrt{n}}+O\left(\frac{1}{n}\right) \quad \text{and} \quad \Vvv_n[u] = O\left(\frac{1}{\sqrt{n}}\right).
\end{eqnarray*}
The following lemma provides bounds for higher moments:
%%%%%%%%%%%%%%%%%%%%%%%%%%%%%%%%%%%%%%%%%%%%%%%%%%%%%%%%%%
%%%%%%%%%%%%%%%%%%%%%%%%%%%%%%%%%%%%%%%%%%%%%%%%%%%%%%%%%%
\begin{lemma}
	Let $F$ be a locally bounded function such that $|F(x)|e^{-c|x|}$ is bounded for some constant $c>0$ and such that there exists $C>0$, $a>1$ and $k\geq 1$ such that
	\begin{eqnarray*}
		|F(x)| \leq C |x|^k,\quad \forall \, x \in [-a,a].
	\end{eqnarray*}
	Then, there exists $C'>0$ such that
	\begin{eqnarray*}
		\esp_n[|F(u)|] \leq C' n^{-k/4}.
	\end{eqnarray*}
\end{lemma}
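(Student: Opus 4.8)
The plan is to split $\esp_n[|F(u)|]$ at the threshold $a>1$ furnished by the hypothesis. On the event $\{|u|\le a\}$ the hypothesis bounds $|F(u)|$ by the polynomial $C|u|^{k}$, which I will estimate through the small moments of $u$; on $\{|u|>a\}$ the hypothesis only gives $|F(u)|\le Me^{c|u|}$ for some $M>0$, but this contribution will turn out to be superpolynomially small by a large-deviation estimate for the Gamma law. It is convenient first to reformulate: under $\p_n$, since $\beta^{2}=n^{-1/2}$, we have $u=-\log(\beta^{2}X)$ with $X\sim\mathrm{Gamma}(\nu)$, $\nu=\sqrt n+\tfrac12$. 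Setting $Y=\beta^{2}X$, the variable $Y$ has mean $\beta^{2}\nu=\theta=1+\tfrac{1}{2\sqrt n}$ and variance $\beta^{4}\nu=\beta^{2}\theta=O(n^{-1/2})$, so $Y$ concentrates around $1$ at scale $n^{-1/4}$, and $u=-\log Y$ is centred at $O(n^{-1/2})$.

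For the bulk, observe that $\{|u|\le a\}=\{e^{-a}\le Y\le e^{a}\}$, and on this set the mean value theorem gives $|u|=|\log Y|\le e^{a}|Y-1|$. Using $|F(u)|\le C|u|^{k}$ there,
\[
 \esp_n\big[|F(u)|\mathbf{1}_{\{|u|\le a\}}\big]\ \le\ Ce^{ak}\,\esp_n\big[|Y-1|^{k}\big]\ \le\ 2^{k-1}Ce^{ak}\big(\beta^{2k}\esp\big[|X-\nu|^{k}\big]+|\theta-1|^{k}\big).
\]
Now $|\theta-1|^{k}=(2\sqrt n)^{-k}=O(n^{-k/2})=O(n^{-k/4})$, and the central moments of the Gamma law obey $\esp[|X-\nu|^{k}]\le C_{k}\,\nu^{k/2}$ (this follows from the explicit cumulants $\kappa_{j}=(j-1)!\,\nu$ of $\mathrm{Gamma}(\nu)$, or from Rosenthal's inequality), whence $\beta^{2k}\esp[|X-\nu|^{k}]\le C_{k}\,n^{-k/2}(2\sqrt n)^{k/2}=O(n^{-k/4})$. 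This is the desired bound, and it occurs precisely at the rate $\beta^{2k}\nu^{k/2}$.

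For the tail, fix $M$ with $|F(x)|\le Me^{c|x|}$. Since $|u|=u$ on $\{u>0\}$,
\[
 \esp_n\big[|F(u)|\mathbf{1}_{\{|u|>a\}}\big]\ \le\ M\,\esp_n\big[e^{cu}\mathbf{1}_{\{u>a\}}\big]+M\,\esp_n\big[e^{-cu}\mathbf{1}_{\{u<-a\}}\big],
\]
and since $e^{u}=\beta^{-2}X^{-1}$, $\{u>a\}=\{X<\beta^{-2}e^{-a}\}$ and $\{u<-a\}=\{X>\beta^{-2}e^{a}\}$, the two summands equal $\beta^{-2c}\esp[X^{-c}\mathbf{1}_{\{X<\beta^{-2}e^{-a}\}}]$ and $\beta^{2c}\esp[X^{c}\mathbf{1}_{\{X>\beta^{-2}e^{a}\}}]$. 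Taking $n$ large enough that $\nu>c$, each of these factors as a polynomial-in-$n$ prefactor, $\beta^{-2c}\Gamma(\nu-c)/\Gamma(\nu)$ respectively $\beta^{2c}\Gamma(\nu+c)/\Gamma(\nu)$ (both in fact $O(1)$ by the $\Gamma$-ratio asymptotics), times $\p[\mathrm{Gamma}(\nu-c)\le\beta^{-2}e^{-a}]$ respectively $\p[\mathrm{Gamma}(\nu+c)\ge\beta^{-2}e^{a}]$. Since $\nu\mp c=\sqrt n+O(1)$ while the thresholds are $\sqrt n\,e^{\mp a}$, in both cases a Gamma variable is asked to deviate from its mean by a multiplicative factor tending to $e^{\mp a}$, which stays bounded away from $1$ because $a>1$; the Chernoff bound $\p[\mathrm{Gamma}(m)\le my]\le e^{-m(y-1-\log y)}$ for $y<1$ (and its analogue for $y>1$) then makes both probabilities decay like $e^{-c'\sqrt n}$ for some $c'>0$. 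Hence $\esp_n[|F(u)|\mathbf{1}_{\{|u|>a\}}]=O(e^{-c'\sqrt n})=o(n^{-k/4})$.

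Combining the two parts yields $\esp_n[|F(u)|]=O(n^{-k/4})$ for all $n$ large enough, which gives the lemma (the finitely many remaining values of $n$, for which $\esp_n[|F(u)|]$ is in any case finite once $\nu>c$, only affect the constant $C'$). The only genuinely delicate point is the bulk estimate, and it rests entirely on the tuning $\beta^{2}\nu=\theta\to1$ of the model: this is what makes $u=-\log(\beta^{2}X)$ a small quantity rather than one of size $\log n$, and combining it with the $\nu^{k/2}$ growth of the Gamma central moments produces exactly the announced rate $\beta^{2k}\nu^{k/2}=O(n^{-k/4})$; the tail estimate is a routine Gamma large-deviation computation.
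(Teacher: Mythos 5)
Your proof is correct, but it follows a genuinely different route from the paper's. Both arguments split at the threshold $a$ dictated by the hypotheses, but the paper works directly with the explicit density of $u$: it lower-bounds the normalisation $\beta^{2\nu}\Gamma(\nu)\geq c e^{-\nu}\beta$ via Stirling, bounds the bulk contribution by a Gaussian-type integral using the pointwise inequality $1-y-e^{-y}\leq -c_1y^2$ on $[-a,a]$ (a Laplace-method estimate giving $C\beta^{k+1}$ before renormalisation), and kills the two tails by direct pointwise bounds on the integrand. You instead never touch the density: you transfer to $Y=\beta^2X$, bound $|u|\leq e^a|Y-1|$ on the bulk by the mean value theorem and conclude via the central-moment bound $\esp[|X-\nu|^k]\leq C_k\nu^{k/2}$, which produces the rate $\beta^{2k}\nu^{k/2}=O(n^{-k/4})$; for the tails you use the Gamma-tilting identity $\esp[X^{\mp c}\mathbf 1_{\cdot}]=\tfrac{\Gamma(\nu\mp c)}{\Gamma(\nu)}\p[\mathrm{Gamma}(\nu\mp c)\in\cdot\,]$ together with a Chernoff bound, getting $O(e^{-c'\sqrt n})$. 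What your approach buys is a proof that avoids Stirling and any pointwise control of the log-Gamma density, making transparent that the rate comes from the tuning $\beta^2\nu\to 1$ plus the $\nu^{k/2}$ growth of Gamma central moments; what the paper's buys is a short, self-contained integral computation that needs nothing beyond the explicit density. One shared caveat: the positive tail requires $\nu>c$ (in the paper this is implicit in the bound $e^{(c-\nu)y+\nu}\leq Ce^{-c_2\nu(y-1)}$, in your write-up it is explicit), so in both cases the stated inequality really holds for $n$ large enough, with small $n$ absorbed into the constant whenever the expectation is finite; your explicit handling of this point is, if anything, slightly more careful than the paper's.
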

%%%%%%%%%%%%%%%%%%%%%%%%%%%%%%%%%%%%%%%%%%%%%%%%%%%%%%%%%%
%%%%%%%%%%%%%%%%%%%%%%%%%%%%%%%%%%%%%%%%%%%%%%%%%%%%%%%%%%
\begin{proof}
	Write again $\beta=n^{-1/4}$, $\theta=1+1/(2\sqrt{n})$ and $\nu= \beta^{-2}\theta=\sqrt{n}+1/2$.
	First, an application of Stirling's formula shows that
	\begin{eqnarray*}
		\beta^{2\nu} \Gamma(\nu) \geq c e^{-\nu}\beta,
	\end{eqnarray*}
	for some $c>0$. Next, allowing the value of $C$ to change from line to line,
	\begin{eqnarray*}
		e^{\nu}\int^{a}_{-a} |F(y)|\,e^{-\nu y-\beta^{-2}e^{-y}}dy
		&\leq&
		C\int^{a}_{-a} |y|^k\,e^{\beta^{-2}(1- y-e^{-y})}dy\\
		&\leq&
		C\int^{a}_{-a} |y|^k\,e^{-c_1\beta^{-2}y^2}dy\\
		&\leq&
		C\beta^{k+1}\int^{\beta^{-1}a}_{-\beta^{-1}a} |y|^k\,e^{-c_1y^2}dy
		\leq  C\beta^{k+1},
	\end{eqnarray*}
	for some $c_1>0$.
	We bound the contributions of $[-a,a]^c$: for some small enough $c_2>0$, we have
	\begin{eqnarray*}
		e^{\nu}\int^{\infty}_{a} |F(y)|\,e^{-\nu y-\beta^{-2}e^{-y}}dy
		&\leq&
		C\int^{\infty}_{a} e^{-c_2\nu (y-1)}dy
		\,\,\, \leq \,\,\, C \nu^{-1}e^{-c_2 \nu (a-1)},
	\end{eqnarray*}
	and
	\begin{eqnarray*}
		e^{\nu}\int^{-a}_{-\infty} |F(y)|\,e^{-\nu y-\beta^{-2}e^{-y}}dy 
		&\leq&
		\int^{-a}_{-\infty} e^{-c_3\beta^{-2}e^{-y}}dy \,\,\, \leq \,\,\, C \nu^{-1}e^{-c_3\nu e^{a}},
	\end{eqnarray*}
	for some $c_3>0$.
\end{proof}
%%%%%%%%%%%%%%%%%%%%%%%%%%%%%%%%%%%%%%%%%%%%%%%%%%%%%%%%%%
%%%%%%%%%%%%%%%%%%%%%%%%%%%%%%%%%%%%%%%%%%%%%%%%%%%%%%%%%%
In particular, for each $k\geq 1$, we can find constants $C_k>0$ such that
\begin{eqnarray*}
	\esp_n[|u|^k] \leq C_k n^{-k/4}\quad \text{and} \quad \esp_n[|W|^k] \leq C_k n^{-k/4}.
\end{eqnarray*}

%%%%%%%%%%%%%%%%%%%%%%%%%%%%%%%%%%%%%%%%%%%%%%%%%%%%%%%%%%
%%%%%%%%%%%%%%%%%%%%%%%%%%%%%%%%%%%%%%%%%%%%%%%%%%%%%%%%%%
%%%%%%%%%%%%%%%%%%%%%%%%%%%%%%%%%%%%%%%%%%%%%%%%%%%%%%%%%%
%%%%%%%%%%%%%%%%%%%%%%%%%%%%%%%%%%%%%%%%%%%%%%%%%%%%%%%%%%
%%%%%%%%%%%%%%%%%%%%%%%%%%%%%%%%%%%%%%%%%%%%%%%%%%%%%%%%%%
%%%%%%%%%%%%%%%%%%%%%%%%%%%%%%%%%%%%%%%%%%%%%%%%%%%%%%%%%%

\section{Dynamical estimates}
We denote by $\mathscr{C}$ the collection of cylindrical functions $F$ of the form $F(u)=f(u_{-n},\cdots, u_n)$ for some $n\geq 0$ and some $f\in C^2(\re^{2n+1})$ with polynomial growth of its derivatives up to order $2$.
We recall the Kipnis-Varadhan estimate:
\begin{eqnarray*}
	\esp_n\left[ \sup_{t\leq T}\left| \int^t_0 F(s,u(sn))\, ds\right|^2\right]
	\leq C \int^T_0 \|F(s,\cdot)\|_{-1,n}ds,
\end{eqnarray*}
where the $||\cdot||_{-1,n}$-norm is defined through the variational formula
\begin{eqnarray*}
	\|F\|_{-1,n} = \sup_{f\in \mathscr{C}}\left\{ 2 \int F(u)fd\mu_n+n \int f Lf d\mu_n\right\}
\end{eqnarray*}
The proof is a straightforward adaptation of \cite{DGP}, Corollary 3.5.
Note that
\begin{eqnarray*}
	-\int f Lf d\mu_n = \frac{\beta^2}{2}\sum_j \int \left( (\partial_{j+1}-\partial_j)f\right)^2d\mu_n
\end{eqnarray*}
so that
\begin{eqnarray*}
	\|F\|_{-1,n}^2 = \sup_{f\in \mathscr{C}}\left\{ 2 \int F(u)fd\mu-\frac{\sqrt{n}}{2}\sum_j \int \left( (\partial_{j+1}-\partial_j)f\right)^2d\mu\right\}.
\end{eqnarray*}
Next, we notice that our model satisfies the integration-by-parts formula:
\begin{eqnarray*}
	\int (\bW_{j+1}-\bW_j)fd\mu_n = \beta^2\int (\partial_{j+1}-\partial_j) f \, d\mu_n.
\end{eqnarray*}

%%%%%%%%%%%%%%%%%%%%%%%%%%%%%%%%%%%%%%%%%%%%%%%%%%%%%%%%%%
%%%%%%%%%%%%%%%%%%%%%%%%%%%%%%%%%%%%%%%%%%%%%%%%%%%%%%%%%%
%%%%%%%%%%%%%%%%%%%%%%%%%%%%%%%%%%%%%%%%%%%%%%%%%%%%%%%%%%
%%%%%%%%%%%%%%%%%%%%%%%%%%%%%%%%%%%%%%%%%%%%%%%%%%%%%%%%%%
\subsection{One-block estimate}

Recall $\bW_j = W_j - \esp_n[W_j]$ and let $\overrightarrow{W}^l_j = \displaystyle \frac{1}{l}\sum^{j+l-1}_{k=j}\bW_k$ for $l\geq 2$. Let also $\tau_j$ denote the canonical shift: $\tau_j u_i=u_{i+j}$. In the following, we consider test functions $(\varphi_j)_j$ which may depend on time.

%%%%%%%%%%%%%%%%%%%%%%%%%%%%%%%%%%%%%%%%%%%%%%%%%%%%%%%%%%
%%%%%%%%%%%%%%%%%%%%%%%%%%%%%%%%%%%%%%%%%%%%%%%%%%%%%%%%%%
\begin{lemma}
	Let $l\geq 2$ and let $g$ be a function with zero-mean respect to $\mu_n$ such that $g(u)=\tilde g(u_{j_0})$ for some $\tilde g:\re \to \re$ and $ j_0 \notin \{0,\cdots,l-1\}$. Write $g_j = g(\tau_j u)$. 
	There exists a constant $C>0$ such that
	\begin{eqnarray*}
	\esp_n\left[ \sup_{t\leq T}\left| \int^t_0 \sqrt{n}\sum_j \varphi_j g_{j}(\bW_{j}(sn)-\overrightarrow{W}^l_j(sn))\,ds\right|^2\right]
	\leq C l \| g \|_{L^2(\mu_n)}^2 \int^T_0 \mathcal{E}_n(\varphi_t)\,dt,
	\end{eqnarray*}
	where $\mathcal{E}_n(\varphi) = \displaystyle \frac{1}{\sqrt{n}} \sum_j \varphi_j^2$.
\end{lemma}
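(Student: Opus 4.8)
The plan is to apply the Kipnis--Varadhan estimate recalled above, which reduces the proof to bounding the time-integrated $\|\cdot\|_{-1,n}$-norm of the instantaneous functional $F_t(u) = \sqrt{n}\sum_j \varphi_j g_j (\bW_j - \overrightarrow{W}^l_j)$. Thus I first fix $t$ and must estimate
\begin{eqnarray*}
\|F_t\|_{-1,n}^2 = \sup_{f\in\mathscr{C}}\left\{ 2\int F_t(u) f\,d\mu_n - \frac{\sqrt{n}}{2}\sum_j \int \big((\partial_{j+1}-\partial_j)f\big)^2\,d\mu_n\right\}.
\end{eqnarray*}
The key algebraic observation is that $\bW_j - \overrightarrow{W}^l_j = \frac1l\sum_{k=j}^{j+l-1}(\bW_j - \bW_k)$, and each increment $\bW_j - \bW_k = -\sum_{i=j}^{k-1}(\bW_{i+1}-\bW_i)$ telescopes into a sum of elementary gradients $\bW_{i+1}-\bW_i$. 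For each such elementary term I invoke the integration-by-parts formula $\int (\bW_{i+1}-\bW_i) h\, d\mu_n = \beta^2 \int (\partial_{i+1}-\partial_i) h\, d\mu_n$ with $h = \varphi_j g_j f$ (using here that $g_j$ depends on a coordinate outside $\{j,\dots,j+l-1\}$, so $g_j$ is not differentiated by $\partial_{i+1}-\partial_i$ for the relevant range of $i$ — this is exactly where the hypothesis $j_0\notin\{0,\dots,l-1\}$ is used after the shift). This rewrites $\int F_t f\,d\mu_n$ as a sum of terms of the form $\sqrt{n}\,\beta^2 \sum_j \varphi_j \frac1l\sum_{k}\sum_{i} \int g_j (\partial_{i+1}-\partial_i)f\, d\mu_n$, i.e. paired against carré-du-champ gradients of $f$.

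Next I apply the weighted Cauchy--Schwarz / Young inequality $2ab \le A a^2 + A^{-1} b^2$ to each elementary term, distributing a free parameter $A$ so that the $\int \big((\partial_{i+1}-\partial_i)f\big)^2 d\mu_n$ pieces are absorbed into the negative Dirichlet term $-\frac{\sqrt{n}}{2}\sum_j \int\big((\partial_{j+1}-\partial_j)f\big)^2 d\mu_n$. Care is needed in the bookkeeping: for a fixed elementary index $i$, the number of pairs $(j,k)$ for which the gradient $\partial_{i+1}-\partial_i$ appears is at most $O(l)$ (since $j\le i<k\le j+l-1$), and this multiplicity, together with the averaging factor $1/l$ and one Cauchy--Schwarz over the telescoping sum of length at most $l$, is what produces the final factor $l$ rather than $l^2$. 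Since $\beta^2 = n^{-1/2}$, one has $\sqrt{n}\,\beta^2 = 1$ and $\sqrt{n}\,\beta^4 = n^{-1/2}$, and after absorbing the gradient terms the remaining contribution is bounded by $C l \sum_j \varphi_j^2 \int g_j^2\, d\mu_n \cdot (\text{scaling factor})$; since $g_j$ has the same law as $g$ under the product measure $\mu_n$, $\int g_j^2 d\mu_n = \|g\|_{L^2(\mu_n)}^2$, giving $\|F_t\|_{-1,n}^2 \le C l \|g\|_{L^2(\mu_n)}^2 \mathcal{E}_n(\varphi_t)$. Integrating in $t$ and feeding this into Kipnis--Varadhan yields the claimed bound.

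The main obstacle I anticipate is the combinatorial bookkeeping in the telescoping-plus-absorption step: one must track, for each fixed elementary gradient coordinate $i$, exactly how many of the $O(l^2)$ pairs $(j,k)$ contribute, choose the Young parameter $A$ (possibly $i$- or $j$-dependent) so that the total weight on $\int\big((\partial_{i+1}-\partial_i)f\big)^2 d\mu_n$ does not exceed the $\tfrac{\sqrt n}{2}$ coefficient available from the negative term, and verify that the leftover is $O(l)$ and not $O(l^2)$. A secondary subtlety is making sure the support condition on $g$ genuinely guarantees that $(\partial_{i+1}-\partial_i)(\varphi_j g_j f) = \varphi_j g_j (\partial_{i+1}-\partial_i)f$ for all the indices $i$ appearing in the telescoping of $\bW_j - \bW_k$, which forces one to be precise about the allowed range of $j_0$ and about how shifts interact with the window $\{j,\dots,j+l-1\}$; edge indices $j$ near the boundary of the window need a dedicated (but routine) check.
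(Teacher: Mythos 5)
Your overall architecture matches the paper's: Kipnis--Varadhan, rewriting $\bW_j-\overrightarrow{W}^l_j$ as a weighted sum of elementary gradients $\bW_i-\bW_{i+1}$, integration by parts (with the support condition $j_0\notin\{0,\dots,l-1\}$ guaranteeing the derivative never hits $g_j$ --- your reading of that hypothesis is correct), Young's inequality to absorb the Dirichlet part, and an $L^2$ bound on the leftover. However, the step you yourself flag as the main obstacle --- getting $l$ rather than $l^2$ --- is not resolved by the mechanism you describe, and as described it fails. First, the multiplicity count is wrong: for a fixed elementary index $i$, the pairs $(j,k)$ with $j\le i<k\le j+l-1$ number $O(l^2)$, not $O(l)$ (only the number of admissible $j$'s is $O(l)$). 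Second, after summing over $k$, the weight attached to $\varphi_j g_j$ at a fixed gradient index is $(j+l-1-i)/l$, which is of order one on average, so the $1/l$ averaging does not damp anything; a plain Cauchy--Schwarz over the window of at most $l-1$ values of $j$ then costs a full factor $l$, and the chain of estimates you propose only yields a bound of order $l^2\,\|g\|^2_{L^2(\mu_n)}\int_0^T\mathcal{E}_n(\varphi_t)\,dt$. A warning sign is that your argument never uses the zero-mean hypothesis on $g$; without exploiting it, a bound linear in $l$ cannot come out of this route.

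The paper closes exactly this gap by grouping terms \emph{before} applying Young. Writing the functional as $\sqrt{n}\sum_k F_k(\bW_k-\bW_{k+1})$ with $F_k=\sum_{i=0}^{l-2}\psi_i\,\varphi_{k-i}g_{k-i}$ and $\psi_i=(l-i)/l$, one integrates by parts and applies Young once per $k$ with $\alpha\sim n^{-1/2}$, so that the Dirichlet term is absorbed with no multiplicity issue, and the leftover is $\frac{2}{\sqrt n}\sum_k\int F_k^2\,d\mu_n$. The key point is that $\int F_k^2\,d\mu_n=\sum_i\psi_i^2\varphi_{k-i}^2\,\|g\|^2_{L^2(\mu_n)}$ \emph{exactly}: the cross terms vanish because the $g_{k-i}$, for distinct $i$, are mean-zero functions of distinct single coordinates under the product measure $\mu_n$, hence mutually orthogonal in $L^2(\mu_n)$. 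It is this orthogonality --- not Cauchy--Schwarz, not the $1/l$ weights --- that produces the factor $l$ upon summing over $k$ (each $j$ lies in at most $l-1$ windows), giving $Cl\,\|g\|^2_{L^2(\mu_n)}\mathcal{E}_n(\varphi)$ and, via Kipnis--Varadhan, the claim. You need to add this orthogonality argument and correct the counting; the remainder of your outline (the use of the support hypothesis, the choice of Young parameter to respect the $\tfrac{\sqrt n}{2}$ coefficient, the final time integration) is sound and coincides with the paper's proof.
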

%%%%%%%%%%%%%%%%%%%%%%%%%%%%%%%%%%%%%%%%%%%%%%%%%%%%%%%%%%
%%%%%%%%%%%%%%%%%%%%%%%%%%%%%%%%%%%%%%%%%%%%%%%%%%%%%%%%%%
\begin{proof}
	%Let $f\in\mathscr{C}$. 
	First, we observe that
	\begin{eqnarray*}
		\sqrt{n}\sum_j \varphi_j g_{j}(\bW_j-\overrightarrow{W}^l_j)
		=
		\sqrt{n}\sum_j \varphi_j g_{j}\sum^{l-2}_{i=0}(\bW_{j+i}-\bW_{j+i+1})\psi_i,
	\end{eqnarray*}
	for  $\psi_i=(l-i)/l$. Rearranging the sum (simply put $k=j+i$), 
	\begin{eqnarray*}
		\sqrt{n}\sum_j \varphi_j g_{j}(\bW_j-\overrightarrow{W}^l_j)
		&=&
		\sqrt{n} \sum_k \left( \sum^{l-2}_{i=0} \varphi_{k-i}g_{k-i}\psi_i\right) (\bW_k-\bW_{k+1})\\
		&=&
		\sqrt{n} \sum_k F_k (\bW_k-\bW_{k+1}),
	\end{eqnarray*}
	where $F_k:=\sum^{l-2}_{i=0} \varphi_{k-i}g_{k-i} \psi_i$.
	Hence, for $f\in \mathscr{C}$, using integration-by-parts and our hypothesis on $g$,
	\begin{eqnarray*}
		&&2\int\sqrt{n}\sum_j \varphi_j g_{j}(\bW_j-\overrightarrow{W}^l_j) f(u) d\mu_n\\
		&=&
		2\int\sqrt{n} \sum_k F_k (\bW_k-\bW_{k+1})f(u) d\mu_n\\
		&=&
		2\beta^2 \sqrt{n} \sum_k F_k  (\partial_k-\partial_{k+1})f(u) d\mu_n\\
		&\leq&
		\int \sum_k \left\{ \alpha F^2_k + \frac{1}{\alpha}((\partial_k-\partial_{k+1})f(u))^2\right\}d\mu_n,
	\end{eqnarray*}
	by Young's inequality and $\beta^2 \sqrt{n}=1$. Taking $\alpha = 2 n^{-1/2}$, we get that the above is bounded by
	\begin{eqnarray*}
		\frac{2}{\sqrt{n}} \sum_k \int F^2_k d\mu + \frac{\sqrt{n}}{2}\sum_k \int ((\partial_k-\partial_{k+1})f(u))^2 d\mu_n
	\end{eqnarray*}
	which yields the bound
	\begin{eqnarray*}
		\left\| \sqrt{n}\sum_j \varphi_j g_{j}(\bW_j-\overrightarrow{W}^l_j) \right\|_{-1,n}^2 
		\leq 
		\frac{2}{\sqrt{n}} \sum_k \int F^2_k d\mu_n.
	\end{eqnarray*}
	Finally, 
	\begin{eqnarray*}
		\sum_k\int F^2_k d\mu_n &=& \sum_k\sum^{l-2}_{i=0} \varphi^2_{k-i} \int g^2 d\mu_n 
		\leq Cl \sqrt{n}\int g^2 d\mu_n \, \mathcal{E}_n(\varphi).
	\end{eqnarray*}
	The result follows from the Kipnis-Varhadan estimate.
\end{proof}
%%%%%%%%%%%%%%%%%%%%%%%%%%%%%%%%%%%%%%%%%%%%%%%%%%%%%%%%%%
%%%%%%%%%%%%%%%%%%%%%%%%%%%%%%%%%%%%%%%%%%%%%%%%%%%%%%%%%%

%%%%%%%%%%%%%%%%%%%%%%%%%%%%%%%%%%%%%%%%%%%%%%%%%%%%%%%%%%
%%%%%%%%%%%%%%%%%%%%%%%%%%%%%%%%%%%%%%%%%%%%%%%%%%%%%%%%%%
%%%%%%%%%%%%%%%%%%%%%%%%%%%%%%%%%%%%%%%%%%%%%%%%%%%%%%%%%%
%%%%%%%%%%%%%%%%%%%%%%%%%%%%%%%%%%%%%%%%%%%%%%%%%%%%%%%%%%
\subsection{The second-order Boltzmann-Gibbs principle}
Let 
\begin{eqnarray*}
	Q(l,t) = \left( \overrightarrow{W}^l_0(t)\right)^2 - \frac{\sigma_n^2}{l},
	\quad \text{with} \quad
	\sigma_n^2:= \beta^2 + \frac{\beta^4}{2}=\Vvv_n[W].	
\end{eqnarray*}
The following is the central estimate in our proof:
%%%%%%%%%%%%%%%%%%%%%%%%%%%%%%%%%%%%%%%%%%%%%%%%%%%%%%%%%%
%%%%%%%%%%%%%%%%%%%%%%%%%%%%%%%%%%%%%%%%%%%%%%%%%%%%%%%%%%
\begin{proposition}\label{thm:second-order}
	\begin{eqnarray*}
		\esp_n\left[\sup_{t\leq T}\left| \int^t_0 \sqrt{n}\sum_j \left\{ 
				\bW_{j-1}(sn)\bW_{j}(sn)-
				%\left((\overrightarrow{W}^l_j(sn))^2 - \frac{\beta^2}{l}	\right)
				\tau_jQ(l,sn)
		\right\}\varphi_j ds\right|^2\right]
		&\leq&
		C
		\left( \frac{l}{\sqrt{n}} + \frac{T}{l^2}\right)
		\int^T_0\mathcal{E}_n(\varphi_t)\,dt.
	\end{eqnarray*}
\end{proposition}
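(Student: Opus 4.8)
The plan is to pass through the Kipnis--Varadhan inequality recalled above, reducing the estimate to a bound on the $\|\cdot\|_{-1,n}$--norm of the integrand, and to split that integrand into a piece controlled by the one-block estimate (which, like that estimate, carries no extra factor of $T$) and a lower-order piece controlled by the crude second-moment bound $\esp_n[\sup_{t\le T}|\int_0^t F\,ds|^2]\le T\int_0^T\esp_n[F^2]\,ds$; it is this cruder estimate that should produce the term $\frac{T}{l^2}\int_0^T\mathcal{E}_n(\varphi_t)\,dt$.

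Everything starts from the identity
\begin{eqnarray*}
	\bW_{j-1}\bW_j - \Big((\overrightarrow{W}^l_j)^2 - \tfrac{\sigma_n^2}{l}\Big)
	= \bW_{j-1}\big(\bW_j - \overrightarrow{W}^l_j\big) + \overrightarrow{W}^l_j\big(\bW_{j-1} - \overrightarrow{W}^l_j\big) + \tfrac{\sigma_n^2}{l}.
\end{eqnarray*}
To the first term on the right I apply the one-block estimate with $g=\bW_{-1}$, so that $g_j=\bW_{j-1}$ is a zero-mean single-site function whose base coordinate $j_0=-1$ lies outside $\{0,\dots,l-1\}$ (licit since $l\ge2$); as $\|g\|_{L^2(\mu_n)}^2=\Vvv_n[W]=\sigma_n^2=O(n^{-1/2})$, this term contributes at most $Cl\sigma_n^2\int_0^T\mathcal{E}_n(\varphi_t)\,dt\le C\frac{l}{\sqrt n}\int_0^T\mathcal{E}_n(\varphi_t)\,dt$, the first term of the claimed bound. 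The sum of the remaining two terms, $\overrightarrow{W}^l_j(\bW_{j-1}-\overrightarrow{W}^l_j)+\frac{\sigma_n^2}{l}$, has zero $\mu_n$-mean (indeed $\esp_n[\overrightarrow{W}^l_j(\bW_{j-1}-\overrightarrow{W}^l_j)]=-\sigma_n^2/l$). For it I would again telescope, writing $\bW_{j-1}-\overrightarrow{W}^l_j=\frac1l\sum_{i=0}^{l-1}(\bW_{j-1}-\bW_{j+i})$ and expanding each $\bW_{j-1}-\bW_{j+i}$ into elementary gradients $\bW_m-\bW_{m+1}$, and run the same integration-by-parts (using $\int(\bW_{m+1}-\bW_m)f\,d\mu_n=\beta^2\int(\partial_{m+1}-\partial_m)f\,d\mu_n$) and Young's inequality argument as in the one-block estimate, now with the block average $\overrightarrow{W}^l_j$ in the role of the multiplier. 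The point is that $\|\overrightarrow{W}^l_j\|_{L^2(\mu_n)}^2=\sigma_n^2/l$ is smaller than $\|\bW\|^2$ by a factor $l$, which offsets the $l^2$ pairs $(i,m)$ created by the double telescoping; the bulk of this term should then contribute of order $\frac1{\sqrt n}\int_0^T\mathcal{E}_n(\varphi_t)\,dt$ after Kipnis--Varadhan, and what is left is a residual of size $O(l^{-2})$ in $L^2(\mu_n)$ — coming from the block boundaries $\bW_m$ with $m$ near $j$ or $j+l-1$ and from the centering constant $\sigma_n^2/l$ — which is killed by the crude second-moment bound, yielding $\frac{T}{l^2}\int_0^T\mathcal{E}_n(\varphi_t)\,dt$. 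Equivalently one can organize the computation as a dyadic iteration over block sizes $2,4,\dots,l$ via the identity $(\overrightarrow{W}^{2m}_j)^2-\frac{\sigma_n^2}{2m}=\frac14\big((\overrightarrow{W}^{m}_j)^2-\frac{\sigma_n^2}{m}\big)+\frac14\big((\overrightarrow{W}^{m}_{j+m})^2-\frac{\sigma_n^2}{m}\big)+\frac12\overrightarrow{W}^{m}_j\overrightarrow{W}^{m}_{j+m}$, applying the one-block estimate to each cross term $\overrightarrow{W}^{m}_j\overrightarrow{W}^{m}_{j+m}$ (a product over two disjoint adjacent blocks) and summing the geometric series of the resulting costs.

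The step I expect to be the main obstacle is precisely this use of a block-average multiplier, since the one-block estimate as stated has a single-site multiplier \emph{independent} of the gradients being summed. When $\overrightarrow{W}^l_j$ overlaps the telescoping path, the integration by parts produces extra terms $\big[(\partial_m-\partial_{m+1})\overrightarrow{W}^l_j\big]f$; since $\partial_m\overrightarrow{W}^l_j=\frac1l e^{-u_m}\mathbf{1}\{m\in\{j,\dots,j+l-1\}\}$ and $e^{-u_m}-e^{-u_{m+1}}=-(\bW_m-\bW_{m+1})$, these close back up into gradient terms carrying an extra $1/l$, while the finitely many genuine boundary terms carry a deterministic factor that is harmless after replacing $f$ by $f-\esp_n f$; arranging this bookkeeping so that nothing worse than $\frac l{\sqrt n}+\frac T{l^2}$ survives is the delicate part. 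A secondary subtlety is that $\esp_n[\bW^3]=O(n^{-1})\ne0$, so $\tau_jQ(l,\cdot)$ — unlike $\bW_{j-1}\bW_j$ — has a non-trivial first-chaos component of size $O(1/l)$; using $\int\bW_k f\,d\mu_n=\beta^2\int\partial_k f\,d\mu_n$ one checks that this component is a gradient-type term whose contribution is $o\big(l^{-2}\int_0^T\mathcal{E}_n(\varphi_t)\,dt\big)$, hence absorbed.
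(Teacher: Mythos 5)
Your proposal follows the paper's proof essentially step for step: the same three-term decomposition, the one-block estimate with $g=\bW_{-1}$ for the piece $\bW_{j-1}(\bW_j-\overrightarrow{W}^l_j)$, integration by parts with the block-average multiplier $\overrightarrow{W}^l_j$ (whose Leibniz corrections recombine with the centering constant $\sigma_n^2/l$, exactly as you anticipate), and a crude second-moment/Cauchy--Schwarz-in-time bound, exploiting independence of block averages at distance at least $l$, for the leftover first-chaos term of order $1/l$, which produces the $T/l^2$ contribution. One harmless inaccuracy: since each elementary gradient location is hit by about $l$ values of $j$, Young's inequality forces $\alpha\sim l/\sqrt{n}$, so the block-multiplier bulk contributes $C\tfrac{l}{\sqrt{n}}\int_0^T\mathcal{E}_n(\varphi_t)\,dt$ (as in the paper's lemma), not $C\tfrac{1}{\sqrt{n}}\int_0^T\mathcal{E}_n(\varphi_t)\,dt$, which is still within the claimed bound.
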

%%%%%%%%%%%%%%%%%%%%%%%%%%%%%%%%%%%%%%%%%%%%%%%%%%%%%%%%%%
%%%%%%%%%%%%%%%%%%%%%%%%%%%%%%%%%%%%%%%%%%%%%%%%%%%%%%%%%%
\begin{proof}
	Decompose as follows:
	\begin{eqnarray*}
		\bW_{j-1}\bW_{j}-
				%\left((\overrightarrow{W}^l_j)^2 - \frac{\beta^2}{l}	\right)
				\tau_jQ(l)
		&=&
		\bW_{j-1}[\bW_{j}-\overrightarrow{W}^l_j]\\
		&+&
		\overrightarrow{W}^l_j[\bW_{j-1}-\overrightarrow{W}^l_j] + \frac{\beta^2}{l}\overrightarrow{W}^l_j + \frac{\sigma_n^2}{l}\\
		&-&
		\frac{\beta^2}{l}\overrightarrow{W}^l_j .
	\end{eqnarray*}
	The first term is handled with the one-block estimate with $g=\bW_{-1}$ together with $\esp_n[|\bW|^2]\leq C n^{-1/2}$ and gives the bound with the $l/\sqrt{n}$ term. The second one is the object of the next lemma and gives the same bound. The third one can be estimated by a careful $L^2$ computation and gives the bound with the $T/l^2$ term: using $\beta^2 \sqrt{n}=1$, applying Jensen's inequality, Tonelli and stationarity,
	\begin{eqnarray*}
		\esp_n\left[ \sup_{t\leq T}\left| \int^t_0 \sqrt{n} \frac{\beta^2}{l} \sum_j \overrightarrow{W}^l_j(sn) \varphi_j ds\right|^2\right]
		&\leq&
		\frac{1}{l^2} 
		\esp_n\left[ \sup_{t\leq T} t \int^t_0 \left|\sum_j \overrightarrow{W}^l_j(sn) \varphi_j \right|^2 ds\right]
		\\
		&\leq&
		\frac{T}{l^2} 
		\int^T_0\esp_n\left[ \left|\sum_j \overrightarrow{W}^l_j(sn) \varphi_j \right|^2 \right]  ds.
%		\\
%		&=&
%		\frac{T^2}{l^2} 
%		\esp_n\left[ \left|\sum_j \overrightarrow{W}^l_j \varphi_j \right|^2 \right].
	\end{eqnarray*}
	Next, we have to take dependencies into account to compute the expected value: using again Jensen's inequality and the independence of $\overrightarrow{W}^l_j$ and $\overrightarrow{W}^l_k$ if $|j-k|\geq l$,
	\begin{eqnarray*}
		\esp_n\left[ \left|\sum_j \overrightarrow{W}^l_j\varphi_j \right|^2 \right]
		&=&
		\esp_n\left[ \left|\sum^{l-1}_{k=0}\sum_j \overrightarrow{W}^l_{lj+k}\varphi_{lj+k} \right|^2 \right]\\
		&\leq&
		l \sum^{l-1}_{k=0}\esp_n \left[ \left|\sum_j \overrightarrow{W}^l_{lj+k}\varphi_{lj+k} \right|^2 \right]\\
		&=&
		l \sum^{l-1}_{k=0} \sum_j \esp_n\left[ \left|\overrightarrow{W}^l_{lj+k}\right|^2 \right]\varphi_{lj+k}^2 \\
		&\leq&
		\frac{C}{\sqrt{n}}
		\sum^{l-1}_{k=0} \sum_j \varphi_{lj+k}^2
		=
		C \mathcal{E}_n(\varphi),
	\end{eqnarray*}
	as $\esp_n[|\overrightarrow{W}^l|^2]\leq C/l\sqrt{n}$.
\end{proof}
%%%%%%%%%%%%%%%%%%%%%%%%%%%%%%%%%%%%%%%%%%%%%%%%%%%%%%%%%%
%%%%%%%%%%%%%%%%%%%%%%%%%%%%%%%%%%%%%%%%%%%%%%%%%%%%%%%%%%
The following lemma finishes the proof of the Boltzmann-Gibbs principle:
%%%%%%%%%%%%%%%%%%%%%%%%%%%%%%%%%%%%%%%%%%%%%%%%%%%%%%%%%%
%%%%%%%%%%%%%%%%%%%%%%%%%%%%%%%%%%%%%%%%%%%%%%%%%%%%%%%%%%
\begin{lemma}
	\begin{eqnarray*}
		&&\esp_n\left[\sup_{t\leq T}\left| \int^t_0 \sqrt{n}\sum_j \left\{ 
				\overrightarrow{W}^l_j(sn) [\bW_{j-1}(sn)-\overrightarrow{W}^l_j(sn)] 
				+ \frac{\beta^2}{l}	\overrightarrow{W}^l_j(sn)
				+\frac{\sigma_n^2}{l}
		\right\}\varphi_j ds\right|^2\right]\\
		&&
		%\phantom{lospollitosdicenpiopiopiopiopiopiopiopiopio}
		\hspace{65ex}
		\leq
		C\frac{l}{\sqrt{n}} \int^T_0\mathcal{E}_n(\varphi_t)\, dt.
	\end{eqnarray*}
%	\begin{eqnarray*}
%		&&\esp_n\left[
%				\sup_{t\leq T}\left| \int^t_0 \sqrt{n}\sum_j 
%				\Big\{ 
%				\overrightarrow{W}^l_j(sn) [\bW_{j-1}(sn)-\overrightarrow{W}^l_j(sn)] 
%				\right. \right.
%				\\
%				&&
%				\hspace{40ex}
%				\left. \left.
%				+ \frac{\beta^2}{l}	\overrightarrow{W}^l_j(sn)
%				+\frac{\sigma_n}{l}
%		\Big\}\varphi_j ds\right|^2\right]
%		\leq
%		C\frac{Tl}{\sqrt{n}}\mathcal{E}_n(\varphi).
%	\end{eqnarray*}
\end{lemma}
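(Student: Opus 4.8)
The plan is to express the quantity inside the braces as a discrete gradient of some local function in the $\bW$-variables, so that the integration-by-parts formula $\int(\bW_{j+1}-\bW_j)fd\mu_n=\beta^2\int(\partial_{j+1}-\partial_j)fd\mu_n$ can be used to gain a factor of $\beta^2=n^{-1/2}$ when evaluating the $\|\cdot\|_{-1,n}$-norm. Writing $\overrightarrow{W}^l_j$ as a telescoping sum $\overrightarrow{W}^l_j=\frac{1}{l}\sum_{i=0}^{l-1}\bW_{j+i}$, one sees that $\overrightarrow{W}^l_j[\bW_{j-1}-\overrightarrow{W}^l_j]=\overrightarrow{W}^l_j\cdot\frac{1}{l}\sum_{i=0}^{l-1}(\bW_{j-1}-\bW_{j+i})$, and each difference $\bW_{j-1}-\bW_{j+i}$ is itself a telescoping sum of nearest-neighbor gradients $\sum(\bW_{k}-\bW_{k+1})$. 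The crucial algebraic point is that the products $\overrightarrow{W}^l_j\cdot(\bW_k-\bW_{k+1})$ can be rewritten, using $ab=\frac12\big((a+b)^2-a^2-b^2\big)$ or more simply $2ab=(a)(b-b')+\dots$, so that the whole expression becomes a \emph{gradient of a quadratic local function} plus lower-order remainders; the terms $\frac{\beta^2}{l}\overrightarrow{W}^l_j$ and $\frac{\sigma_n^2}{l}$ are precisely the diagonal/It\^o corrections produced when integration-by-parts is applied to such a quadratic (this is why they appear in the statement), so they are exactly what is needed to close the identity. Concretely, I expect an identity of the schematic form
\[
	\sqrt{n}\sum_j\Big\{\overrightarrow{W}^l_j[\bW_{j-1}-\overrightarrow{W}^l_j]+\tfrac{\beta^2}{l}\overrightarrow{W}^l_j+\tfrac{\sigma_n^2}{l}\Big\}\varphi_j
	=\sqrt{n}\sum_k G_k\,(\bW_k-\bW_{k+1})
\]
for a suitable local quadratic function $G_k$ depending on the $\varphi$'s and on at most $2l$ consecutive $\bW$'s, after using $\int(\bW_k-\bW_{k+1})d\mu_n=0$ and the integration-by-parts identity to absorb the constant $\sigma_n^2/l$.

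Once this rewriting is in place, I would follow the template of the one-block lemma verbatim: apply Dynkin/Kipnis-Varadhan to reduce the supremum in $t$ to the $\|\cdot\|_{-1,n}$-norm, then for a test function $f\in\mathscr{C}$ write
\[
	2\int\sqrt{n}\sum_k G_k(\bW_k-\bW_{k+1})f\,d\mu_n=2\beta^2\sqrt{n}\sum_k\int G_k(\partial_k-\partial_{k+1})f\,d\mu_n,
\]
and apply Young's inequality with weight $\alpha=2n^{-1/2}$ exactly as before. Using $\beta^2\sqrt{n}=1$, the cross term is bounded by $\frac{2}{\sqrt n}\sum_k\int G_k^2\,d\mu_n$ plus $\frac{\sqrt n}{2}\sum_k\int((\partial_k-\partial_{k+1})f)^2d\mu_n$, the latter being exactly the Dirichlet-form term that the variational formula subtracts; hence
\[
	\Big\|\sqrt{n}\sum_k G_k(\bW_k-\bW_{k+1})\Big\|_{-1,n}^2\leq\frac{2}{\sqrt n}\sum_k\int G_k^2\,d\mu_n.
\]
It remains to estimate $\sum_k\int G_k^2\,d\mu_n$. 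Since $G_k$ is a sum of $O(l)$ terms each of which is (a bounded coefficient times) a product of two $\bW$-type factors with $L^2(\mu_n)$-norm $O(n^{-1/4})$ (using $\esp_n[|\bW|^2]\le Cn^{-1/2}$ and $\esp_n[|\overrightarrow W^l|^2]\le C/(l\sqrt n)$, together with the static moment bounds from Section 4), a Cauchy--Schwarz in the index $i\le l$ gives $\int G_k^2\,d\mu_n\le Cl\cdot\frac{1}{\sqrt n}\cdot(\text{something of size }\sqrt n\,\mathcal E_n(\varphi)\text{ when summed over }k)$; tracking the powers one lands on $\sum_k\int G_k^2\,d\mu_n\le Cl\sqrt n\cdot\frac{1}{\sqrt n}\,\mathcal E_n(\varphi)=Cl\,\mathcal E_n(\varphi)$, whence $\|\cdot\|_{-1,n}^2\le \frac{Cl}{\sqrt n}\mathcal E_n(\varphi)$, and Kipnis--Varadhan with a final integration in $t$ yields the claimed bound $C\frac{l}{\sqrt n}\int_0^T\mathcal E_n(\varphi_t)\,dt$.

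The main obstacle I anticipate is the bookkeeping in the algebraic identity: one must verify that the gradient rewriting is \emph{exact}, i.e.\ that the It\^o/diagonal corrections produced by integration-by-parts on the quadratic $G_k$ are precisely $\frac{\beta^2}{l}\overrightarrow W^l_j$ and $\frac{\sigma_n^2}{l}$ (with the right signs and no leftover terms of order $1/l$ or $\beta^2$ that fail to telescope). This is where the specific value $\sigma_n^2=\beta^2+\beta^4/2=\Vvv_n[W]$ must enter through the integration-by-parts formula applied to a square $\bW_k\bW_{k+1}$ or $\bW_k^2$, and it is the reason the correction terms are built into the statement of the lemma rather than estimated separately. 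A secondary, more routine difficulty is controlling the support of $G_k$: because $\overrightarrow W^l_j$ couples $l$ consecutive coordinates, $G_k$ depends on roughly $2l$ coordinates and the Cauchy--Schwarz in $i$ costs a factor $l$, which is exactly what produces the $l/\sqrt n$ (rather than $1/\sqrt n$) in the bound; one must make sure no additional factor of $l$ sneaks in, so that the final estimate is sharp enough to be balanced against the $T/l^2$ term in Proposition \ref{thm:second-order}.
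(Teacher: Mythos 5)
Your proposal follows essentially the same route as the paper: telescope $\bW_{j-1}-\overrightarrow{W}^l_j$ into weighted nearest-neighbour gradients, integrate by parts so that the derivatives falling on $\overrightarrow{W}^l_j$ produce exactly the $\tfrac{\beta^2}{l}\overrightarrow{W}^l_j$ and $\tfrac{\sigma_n^2}{l}$ corrections (the $k=0$ term being where $\sigma_n^2=\Vvv_n[W]$ enters), and then close with Young's inequality, Kipnis--Varadhan and $\esp_n[(\overrightarrow{W}^l_j)^2]\leq C/(l\sqrt{n})$; the paper merely organizes the bookkeeping differently, applying Young per pair $(j,k)$ with $\alpha=2l/\sqrt{n}$ rather than collecting terms into your $G_k$ first (both yield the same $l/\sqrt{n}$). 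The one imprecision is that your displayed identity cannot hold pointwise, since its left side contains the constant $\tfrac{\sigma_n^2}{l}$ and the non-gradient term $\tfrac{\beta^2}{l}\overrightarrow{W}^l_j$; as you partly acknowledge, it is only valid after pairing with $f\,d\mu_n$ and integrating by parts, which is exactly how the paper phrases and verifies it.
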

%%%%%%%%%%%%%%%%%%%%%%%%%%%%%%%%%%%%%%%%%%%%%%%%%%%%%%%%%%
%%%%%%%%%%%%%%%%%%%%%%%%%%%%%%%%%%%%%%%%%%%%%%%%%%%%%%%%%%
%%%%%%%%%%%%%%%%%%%%%%%%%%%%%%%%%%%%%%%%%%%%%%%%%%%%%%%%%%
%%%%%%%%%%%%%%%%%%%%%%%%%%%%%%%%%%%%%%%%%%%%%%%%%%%%%%%%%%
\begin{proof}
	Let $f\in\mathscr{C}$.
	We begin with a computation: 
	\begin{eqnarray*}
		\int \sqrt{n}\sum_j \varphi_j \overrightarrow{W}^l_j [\bW_{j-1}-\overrightarrow{W}^l_j]f\, d\mu_n
		&=&
		\int \sqrt{n}\sum_j \varphi_j \overrightarrow{W}^l_j
		\sum^{l-1}_{k=0} \psi_k (\bW_{j+k-1}-\bW_{j+k})		
		fd\mu_n,
	\end{eqnarray*}
	where $\psi_k = (l-k)/l$. We will apply integration-by-parts: for $k\geq 1$,
	\begin{eqnarray*}
		&&\int \overrightarrow{W}^l_j (\bW_{j+k-1}-\bW_{j+k})fd\mu_n \\
		&&= 
		\beta^2\int \overrightarrow{W}^l_j (\partial_{j+k-1}-\partial_{j+k}) f d\mu_n
		+ \frac{\beta^2}{l}\int (\bW_{j+k}-\bW_{j+k-1})\, f d\mu_n.		
	\end{eqnarray*}
	The term $k=0$ has to be handled separately:
	\begin{eqnarray*}
		&&\int \overrightarrow{W}^l_j (\bW_{j-1}-\bW_{j})fd\mu_n  \\
		&&= 
		\beta^2\int \overrightarrow{W}^l_j (\partial_{j-1}-\partial_{j}) f d\mu_n + \frac{\beta^2}{l}\int \bW_j f d\mu_n
		-\frac{\sigma_n^2}{l}\int f\, d\mu_n.
	\end{eqnarray*}
	Carefully recombining the terms yields the identity
	\begin{eqnarray*}
		&&\int \sum_j \varphi_j \left\{\overrightarrow{W}^l_j [\bW_{j-1}-\overrightarrow{W}^l_j]
			+ \frac{\beta^2}{l} \overrightarrow{W}^l_j + \frac{\sigma_n}{l}
		\right\}f\, d\mu_n \\
		&& \quad = 
		\beta^2\int \sum_j \varphi_j \overrightarrow{W}^l_j(sn) 
		\sum^{l-1}_{k=0}\psi_k(\partial_{j+k-1}-\partial_{j+k})fd\mu_n.
	\end{eqnarray*}
	By Young's inequality, twice the above is bounded by
	\begin{eqnarray*}
		\beta^2\int \sqrt{n}\sum_j\sum^{l-1}_{k=0}\psi_k
		\left\{
			\alpha \varphi^2_j (\overrightarrow{W}^l_j)^2 + \frac{1}{\alpha} ((\partial_{j+k-1}-\partial_{j+k})f)^2
		\right\}d\mu_n.
	\end{eqnarray*}
	Taking $\alpha=2l/\sqrt{n}$ and using $\beta^2 \sqrt{n}=1$, the bound becomes
	\begin{eqnarray*}
		&&\int \sum_j\sum^{l-1}_{k=0}\psi_k
		\left\{
			\frac{2l}{\sqrt{n}}\varphi^2_j (\overrightarrow{W}^l_j)^2 + \frac{\sqrt{n}}{2l} ((\partial_{j+k-1}-\partial_{j+k})f)^2
		\right\}d\mu_n\\
		&& \quad \leq
		C\frac{l^2}{\sqrt{n}} \sum_j \varphi_j^2 \esp_n[(\overrightarrow{W}^l_j)^2] + \frac{\sqrt{n}}{2} \int \sum_j((\partial_{j-1}-\partial_{j})f)^2d\mu_n.
	\end{eqnarray*}
	The result follows from Kipnis-Varadhan inequality and the bound $\displaystyle \esp_n[(\overrightarrow{W}^l_j)^2] \leq C \frac{1}{l\sqrt{n}}$.
%	\begin{eqnarray}
%		\esp_n[(\overrightarrow{W}^l_j)^2] \leq C \frac{1}{l\sqrt{n}}.
%	\end{eqnarray}
\end{proof}
%%%%%%%%%%%%%%%%%%%%%%%%%%%%%%%%%%%%%%%%%%%%%%%%%%%%%%%%%%
%%%%%%%%%%%%%%%%%%%%%%%%%%%%%%%%%%%%%%%%%%%%%%%%%%%%%%%%%%
%%%%%%%%%%%%%%%%%%%%%%%%%%%%%%%%%%%%%%%%%%%%%%%%%%%%%%%%%%
%%%%%%%%%%%%%%%%%%%%%%%%%%%%%%%%%%%%%%%%%%%%%%%%%%%%%%%%%%
%%%%%%%%%%%%%%%%%%%%%%%%%%%%%%%%%%%%%%%%%%%%%%%%%%%%%%%%%%
%%%%%%%%%%%%%%%%%%%%%%%%%%%%%%%%%%%%%%%%%%%%%%%%%%%%%%%%%%

\section{Tightness}

We will use Mitoma's criterion \cite{Mitoma}: a sequence $(\mathcal{Y}^n)_n$ is tight in $C([0,T],\mathcal{S}'(\re))$ if and only if $(\mathcal{Y}^n(\varphi))_n$ is tight in $C([0,T],\re)$ for all $\varphi\in \mathcal{S}(\re)$.

%%%%%%%%%%%%%%%%%%%%%%%%%%%%%%%%%%%%%%%%%%%%%%%%%%%%%%%%%%
%%%%%%%%%%%%%%%%%%%%%%%%%%%%%%%%%%%%%%%%%%%%%%%%%%%%%%%%%%
%%%%%%%%%%%%%%%%%%%%%%%%%%%%%%%%%%%%%%%%%%%%%%%%%%%%%%%%%%
%%%%%%%%%%%%%%%%%%%%%%%%%%%%%%%%%%%%%%%%%%%%%%%%%%%%%%%%%%
\subsection{Martingale term}
Recall that
\begin{eqnarray*}
	\mathcal{M}^n_t(\varphi)
	&=&
	\beta \int^t_0\sum_j \gradn \varphi^n_j dB^{(j)}_s
\end{eqnarray*}
has quadratic variation 
\begin{eqnarray*}
	\langle \mathcal{M}^n(\varphi)\rangle_t = \displaystyle \frac{\beta^2}{2}\int^t_0\sum_j (\gradn \varphi^n_j)^2ds
	\leq
	Ct \mathcal{E}(\partial_x\varphi).
\end{eqnarray*}
Hence, from the Burkholder-Davis-Gundy inequality, it follows that
\begin{eqnarray*}
	\esp_n\left[ \left| \mathcal{M}^n_{t_2}(\varphi)-\mathcal{M}^n_{t_1}(\varphi)\right|^p\right]
	%\leq C |t-s|^{p/2} \mathcal{E}_n(\gradn \varphi^n)^{p/2}
	\leq C |t_2-t_1|^{p/2} \mathcal{E}(\partial_x \varphi)^{p/2},
\end{eqnarray*}
for all $p\geq 1$. Tightness follows from Kolmogorov's criterion by taking $p$ large enough.
%%%%%%%%%%%%%%%%%%%%%%%%%%%%%%%%%%%%%%%%%%%%%%%%%%%%%%%%%%
%%%%%%%%%%%%%%%%%%%%%%%%%%%%%%%%%%%%%%%%%%%%%%%%%%%%%%%%%%
%%%%%%%%%%%%%%%%%%%%%%%%%%%%%%%%%%%%%%%%%%%%%%%%%%%%%%%%%%
%%%%%%%%%%%%%%%%%%%%%%%%%%%%%%%%%%%%%%%%%%%%%%%%%%%%%%%%%%
\subsection{Symmetric term}
Recall that
\begin{eqnarray*}
	\spart^n_t(\varphi)
	=
%	\frac12 \int^t_0 \sum_j W_j(sn)\lapln \varphi^n_j ds
%	=
	\frac12 \int^t_0 \sum_j \bW_j(sn)\lapln \varphi^n_j ds.
\end{eqnarray*}
Tightness follows at once from an $L^2$ bound:
\begin{eqnarray*}
	\esp_n\left[ \left|
		\spart^n_{t_2}(\varphi)-\spart^n_{t_1}(\varphi)
	\right|^2
	\right]
	&\leq&
	C |t_2-t_1| \int^{t_2}_{t_1} \sum_j \esp_n \left[ \bW(sn)^2\right] (\lapln \varphi^n_j)^2ds\\
	&\leq&
	C |t_2-t_1|^2\mathcal{E}(\partial_x^2\varphi),
\end{eqnarray*}
where we used $\esp_n \left[ \bW(sn)^2\right]=O(n^{-1/2})$.
%%%%%%%%%%%%%%%%%%%%%%%%%%%%%%%%%%%%%%%%%%%%%%%%%%%%%%%%%%
%%%%%%%%%%%%%%%%%%%%%%%%%%%%%%%%%%%%%%%%%%%%%%%%%%%%%%%%%%
%%%%%%%%%%%%%%%%%%%%%%%%%%%%%%%%%%%%%%%%%%%%%%%%%%%%%%%%%%
%%%%%%%%%%%%%%%%%%%%%%%%%%%%%%%%%%%%%%%%%%%%%%%%%%%%%%%%%%
\subsection{Anti-symmetric term}
Recall
\begin{eqnarray*}
	\apart^n_t(\varphi)
%	&=&
%	\int^t_0 \left( 
%		\partial_s + nS
%	\right)
%	\fieldn_s(\varphi)\, ds\\
	&=&
	\int^t_0 \sqrt{n}\sum_j \left\{ W_j(sn)\gradn \varphi^n_j - (u_j(sn)-\rho_n)\partial_x \varphi^n_j\right\}ds\\
	&=&
	\int^t_0 \sqrt{n}\sum_j \left( W_j(sn)-u_j(sn)\right)\gradn \varphi^n_j ds
	+ E^n_t(\varphi)
\end{eqnarray*}
where
\begin{eqnarray*}
	E^n_t(\varphi) = \int^t_0 \sqrt{n} \sum_j \left(u_j(sn)-\rho_n\right)\left(\gradn \varphi^n_j-\partial_x \varphi^n_j\right)\,ds.
\end{eqnarray*}
Using $\esp_n[|u_j(sn)-\rho_n|^2]=O(n^{-1/2})$ and the mean-value theorem,
\begin{eqnarray*}
	\esp_n\left[ \sup_{t\leq T}\left|E^n_t(\varphi) \right|^2\right]
%	&\leq&
%	n \esp_n\left[ \sup_{t\leq T}t
%	\int^t_0 \sum_j
%	\left| u_j(sn)-\rho_n\right|^2
%	\left|\gradn \varphi^n_j-\partial_x \varphi^n_j \right|^2\right]
%	\,ds\\
	\leq
	n T \int^T_0 \sum_j
	\esp_n\left[ \left| u_j(sn)-\rho_n\right|^2 \right]
	\left|\gradn \varphi^n_j-\partial_x \varphi^n_j \right|^2
	\,ds \leq C \frac{T^2}{n}. 
\end{eqnarray*}
As a consequence, $E^n_{\cdot}(\varphi)$ converges to $0$ in the ucp topology.

Now, a naive Taylor expansion suggests that
\begin{eqnarray*}
	\sqrt{n}\left( W_j(sn)-u_j(sn)\right) = - \frac12\sqrt{n} u_j(sn)^2 + O(\sqrt{n}u_j(sn)^3),
\end{eqnarray*}
explaining in particular the emergence of the quadratic term. This simple argument has two flaws: first, we are unable to handle the quadratic term as is, and second, the order three terms cannot be neglected based on moments considerations only. However, order four and higher terms can be neglected: 
%\begin{eqnarray}
%	&&\esp_n\left[
%		\sup_{t\leq T}
%		\left|
%			\sqrt{n} \int^t_0 \sum_j u_j(sn)^k \gradn\varphi^n_j ds
%		\right|^2
%	\right]
%	\\
%	&&\leq
%	nT \int^T_0 \sum_j \esp_n\left[ u_j(sn)^{2k} \right] (\gradn\varphi^n_j)^2 ds\\
%%	&& \quad 
%%	+	nT\int^T_0 \sum_{j,k} \esp_n\left[ u_j(sn)^k \right] \esp_n\left[ u_k(sn)^k \right] (\varphi^n_j)^2(\varphi^n_k)^2  ds
%%	\\
%	&&\leq
%	C T^2 n^{\frac{3-k}{2}}.
%\end{eqnarray}
\begin{eqnarray*}
	\esp_n\left[
		\sup_{t\leq T}
		\left|
			\sqrt{n} \int^t_0 \sum_j u_j(sn)^k \gradn\varphi^n_j ds
		\right|^2
	\right]
	&\leq&
	nT \int^T_0 \sum_j \esp_n\left[ u_j(sn)^{2k} \right] (\gradn\varphi^n_j)^2 ds\\
	&\leq&
	C T^2 n^{\frac{3-k}{2}}.
\end{eqnarray*}
A similar bound holds for powers of $\bW$.
We proceed now to a Taylor expansion which will be more useful to us: first,
\begin{eqnarray*}
	W_j-u_j = -\frac12 u_j^2+\frac16 u_j^3 + O(u_j^4 + |W_j|^5).
\end{eqnarray*}
Here, the error of order $u^4_j$ takes into consideration positive values of $u_j$ while the term $|W_j|^5$ is included to account for large negative values of $u_j$.
On the other hand,
\begin{eqnarray*}
	aW_ju_j+bW_ju_j^2 = au^2+(b-\frac{a}{2})u_j^3+O(u_j^4 + |W_j|^5).
\end{eqnarray*}
Equating both expansions and setting $a=-1/2$ and $b=-1/12$, we obtain
\begin{eqnarray*}
	W_j-u_j = -\frac12 W_ju_j - \frac{1}{12} W_ju_j^2+O(u_j^4+ |W_j|^5).
\end{eqnarray*}
Keeping in mind the nature of our dynamical estimates, we must find a way to `shift' the index of one of the terms in each product in the right-hand-side. We use the identities
\begin{eqnarray*}
	Lu_j^2 &=& 2(W_{j-1}-W_j)u_j + 2\beta^2,\\
	Lu_j^3 &=& 3(W_{j-1}-W_j)u_j^2+6\beta^2u_j,
\end{eqnarray*}
yielding
\begin{eqnarray*}
	W_j-u_j = -\frac12 W_{j-1}u_j - \frac{1}{12}W_{j-1}u_j^2+\frac14 Lu^2_j+\frac{1}{36}Lu_j^3-\frac{\beta^2}{2}-\frac{1}{6}\beta^2 u_j + O(u_j^4+ W_j^4).
\end{eqnarray*}
We use Taylor expansions one last time to switch between $W_{j-1}u_j$ and $W_{j-1}W_j$:
\begin{eqnarray*}
	W_{j-1}W_j = W_{j-1}u_j - \frac12 W_{j-1}u_j^2 + \frac16 W_{j-1}u_j^3+O(u_j^4+ W_j^4).
\end{eqnarray*}
Hence,
\begin{eqnarray*}
	W_j-u_j 
	&=& -\frac12 W_{j-1}W_j - \frac{1}{3}W_{j-1}u_j^2+\frac14 Lu^2_j+\frac{1}{36}Lu_j^3\\
	&&
	-\frac{\beta^2}{2}-\frac{1}{6}\beta^2 u_j + \frac{1}{12}W_{j-1}u_j^3+ O(u_j^4 + |W_j|^5).
\end{eqnarray*}
We will investigate the convergence of each of these terms separately. 
The first (and main) term will be treated at the end of the section.
The analysis of the second term is rather lengthy and will be left for the appendix.
The terms involving $L$ are treated in Lemma \ref{thm:lemma-Lu2} and \ref{thm:lemma-Lu3} below.
The term $\beta^2 u_j$ is easily seen to be tight. 
Finally, the term involving $W_{j-1}u^3_j$ can be neglected by means of an $L^2$ computation.

%The term involving $W_{j-1}u^3_j$ can be neglected by means of an $L^2$ computation. The term involving $\beta^2 u_j$ is easily seen to be tight. The terms involving $L$ are treated in Lemma \ref{thm:lemma-Lu2} and \ref{thm:lemma-Lu3}.
%The analysis of the second term is rather lengthy and will be left for the appendix. We defer the study of the first term to the end of the section. Note that, as we are testing against a gradient, the constants do not contribute.
%%%%%%%%%%%%%%%%%%%%%%%%%%%%%%%%%%%%%%%%%%%%%%%%%%%%%%%%%%
%%%%%%%%%%%%%%%%%%%%%%%%%%%%%%%%%%%%%%%%%%%%%%%%%%%%%%%%%%
\begin{lemma}\label{thm:lemma-Lu2}
	There exists a constant $C>0$ such that
	\begin{eqnarray*}
		\esp_n\left[ \sup_{t\leq T} \left|\sqrt{n}\int^t_0  \sum_j L u_j^2(sn) \gradn\varphi_j^n ds\right|^2\right]
		\leq
		C \frac{T}{\sqrt{n}}.
	\end{eqnarray*}
\end{lemma}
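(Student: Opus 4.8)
The plan is to recognize the time-integrand as an \emph{exact discrete $L$-gradient}, apply the Kipnis--Varadhan estimate, and then control the resulting $\|\cdot\|_{-1,n}^2$-norm by combining the integration-by-parts formula of the model with a telescoping cancellation. The starting point is that $Lu_j^2 = 2(W_{j-1}-W_j)u_j + 2\beta^2 = 2(\bW_{j-1}-\bW_j)u_j + 2\beta^2$, and that the constant $2\beta^2$ contributes nothing because $\sum_j\gradn\varphi^n_j = 0$, so that
\[
	\Phi_s(u) := \sqrt n\sum_j Lu_j^2\,\gradn\varphi^n_j(s)
	= 2\sqrt n\sum_j (\bW_{j-1}-\bW_j)\,u_j\,\gradn\varphi^n_j(s)
	= 2\sqrt n\sum_k F_k\,(\bW_k-\bW_{k+1}),
\]
where an Abel summation, exactly as in the one-block estimate, identifies $F_k = u_{k+1}\gradn\varphi^n_{k+1}(s)$. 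Since $\sqrt n\int_0^t\sum_j Lu_j^2(sn)\,\gradn\varphi^n_j\,ds = \int_0^t\Phi_s(u(sn))\,ds$, the Kipnis--Varadhan estimate reduces the lemma to proving $\|\Phi_s\|_{-1,n}^2 \le C n^{-1/2}\mathcal{E}_n(\gradn\varphi^n(s))$ uniformly in $s\le T$; integrating this in $s$ and using that $\mathcal{E}_n(\gradn\varphi^n(s))$ is bounded uniformly in $s$ (being a Riemann sum for $\mathcal{E}(\partial_x\varphi)$) then yields the stated $CT/\sqrt n$.

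To establish the $\|\cdot\|_{-1,n}^2$-bound I would insert $\Phi_s$ into the variational formula and, for $f\in\mathscr{C}$, integrate by parts against each factor $\bW_k-\bW_{k+1}$ via $\int(\bW_{k+1}-\bW_k)g\,d\mu_n = \beta^2\int(\partial_{k+1}-\partial_k)g\,d\mu_n$ with $g = F_k f$. The new feature relative to the one-block estimate is that here $F_k$ depends on $u_{k+1}$, so the integration by parts also produces the term $\beta^2\int (\partial_{k+1}-\partial_k)F_k\cdot f\,d\mu_n = \beta^2\int\gradn\varphi^n_{k+1}\,f\,d\mu_n$; crucially $\sum_k\gradn\varphi^n_{k+1} = 0$, so this zeroth-order contribution vanishes once summed over $k$ (equivalently, one may restrict to centered $f$). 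What remains, using $\beta^2\sqrt n = 1$, is $2\int\Phi_s f\,d\mu_n = -4\sum_k\int u_{k+1}\,\gradn\varphi^n_{k+1}\,(\partial_{k+1}-\partial_k)f\,d\mu_n$, and Young's inequality with a weight of order $n^{-1/2}$ tuned to the coefficient $\sqrt n/2$ of the Dirichlet form in the variational formula bounds the right-hand side by $\tfrac{C}{\sqrt n}\sum_k\int u_{k+1}^2(\gradn\varphi^n_{k+1})^2\,d\mu_n$, up to a Dirichlet-form term that is cancelled in the variational formula. Since $\int u_{k+1}^2(\gradn\varphi^n_{k+1})^2\,d\mu_n = \esp_n[u^2]\,(\gradn\varphi^n_{k+1}(s))^2$ and $\sum_k(\gradn\varphi^n_{k+1}(s))^2 = \sqrt n\,\mathcal{E}_n(\gradn\varphi^n(s))$, this gives $\|\Phi_s\|_{-1,n}^2 \le C\,\esp_n[u^2]\,\mathcal{E}_n(\gradn\varphi^n(s))$, and $\esp_n[u^2] = \Vvv_n[u] + \esp_n[u]^2 = O(n^{-1/2})$ by the static estimates (or directly from $\esp_n[|u|^k]\le C_k n^{-k/4}$ with $k=2$) finishes the argument.

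The argument is short once it is arranged as above, and I do not expect a genuine analytic obstruction; the step requiring care is the algebraic bookkeeping, namely carrying the Abel summation and the integration by parts with the $u$-dependent coefficients $F_k$ through without error, and in particular observing that the remaining zeroth-order contribution is a telescoping sum and hence vanishes. This cancellation is precisely what upgrades the naive size $O(1)$ of $\|\Phi_s\|_{-1,n}^2$ to $O(n^{-1/2})$, and it is the structural reason why the expansion of $W_j-u_j$ in the previous section was arranged so as to expose the exact-gradient piece $\tfrac14 Lu_j^2$.
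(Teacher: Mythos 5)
Your proof is correct and follows essentially the same route as the paper's: use the identity $Lu_j^2=2(\bW_{j-1}-\bW_j)u_j+2\beta^2$, integrate by parts against $f$ in the variational formula, absorb the Dirichlet term via Young's inequality with $\alpha\sim n^{-1/2}$, and conclude with $\esp_n[u^2]=O(n^{-1/2})$ and Kipnis--Varadhan. The only cosmetic difference is bookkeeping of the zeroth-order terms: the paper sees the $2\beta^2$ constant cancel exactly, coordinate by coordinate, against the term produced when $\partial_j$ hits the coefficient $u_j$, whereas you discard each piece separately via $\sum_j\gradn\varphi^n_j=0$ (equivalently by centering $f$, which is legitimate since $Lu_j^2$ has mean zero under the invariant measure).
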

\begin{proof}
	Let $g\in \mathscr{C}$. By integration-by-parts,
\begin{eqnarray*}
	\int  L u_j^2 g d\mu_n
	&=&
	2\int (\bW_{j-1}-\bW_j)u_j g d\mu_n +2\beta^2 \int g d\mu_n \\
	&=& 
	2\beta^2 \int u_j (\partial_{j-1}-\partial_j) gd\mu_n.
\end{eqnarray*}
Hence, by Young's inequality,
\begin{eqnarray*}
	&&2\int \sqrt{n}\sum_j L u_j^2 g \gradn\varphi_j^n d\mu_n
	\leq 4 \int \sum_j \left\{
		\alpha u_j^2 (\gradn\varphi_j^n)^2 + \frac{1}{\alpha} \left( (\partial_{j-1}-\partial_j) g\right)^2
	\right\} d\mu_n.
\end{eqnarray*}
With $\alpha=8/\sqrt{n}$, this is further bounded by
\begin{eqnarray*}
	&&\frac{16}{\sqrt{n}} \sum_j \esp_n[u_j^2] (\gradn\varphi_j^n)^2 + \frac{\sqrt{n}}{2} \int \sum_j \left( (\partial_{j-1}-\partial_j) g\right)^2 d\mu_n \\
	&&
	\leq \frac{C}{\sqrt{n}} + \frac{\sqrt{n}}{2} \int \sum_j \left( (\partial_{j-1}-\partial_j) g\right)^2 d\mu_n.
\end{eqnarray*}
The result follows from Kipnis-Varadhan inequality.
\end{proof}
%%%%%%%%%%%%%%%%%%%%%%%%%%%%%%%%%%%%%%%%%%%%%%%%%%%%%%%%%%
%%%%%%%%%%%%%%%%%%%%%%%%%%%%%%%%%%%%%%%%%%%%%%%%%%%%%%%%%%
\begin{lemma}\label{thm:lemma-Lu3}
	There exists a constant $C>0$ such that
	\begin{eqnarray*}
		\esp_n\left[ \sup_{t\leq T} \left|\sqrt{n}\int^t_0  \sum_j L u_j^3(sn) \gradn\varphi^n_j ds\right|^2\right]
		\leq
		C \frac{T}{n}.
	\end{eqnarray*}
\end{lemma}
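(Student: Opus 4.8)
The plan is to follow the proof of Lemma \ref{thm:lemma-Lu2} almost verbatim, with one extra power of $u$, using the identity $Lu_j^3 = 3(\bW_{j-1}-\bW_j)u_j^2 + 6\beta^2 u_j$ recorded just above together with the Kipnis--Varadhan inequality. Fix $g\in\mathscr{C}$. The first step is to integrate $\int Lu_j^3\,g\,d\mu_n$ by parts using the integration-by-parts formula $\int(\bW_j-\bW_{j-1})f\,d\mu_n = \beta^2\int(\partial_j-\partial_{j-1})f\,d\mu_n$. Since $(\partial_{j-1}-\partial_j)(u_j^2 g) = u_j^2(\partial_{j-1}-\partial_j)g - 2u_j g$, the term $-2u_j g$ produced by the chain rule is exactly cancelled by the linear term $6\beta^2 u_j$ present in $Lu_j^3$, and one is left with the clean identity
\[
	\int Lu_j^3\,g\,d\mu_n = 3\beta^2 \int u_j^2(\partial_{j-1}-\partial_j)g\,d\mu_n,
\]
in complete analogy with the way the constant $2\beta^2$ disappeared in Lemma \ref{thm:lemma-Lu2}.

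Next, multiplying by $2\sqrt{n}\,\gradn\varphi^n_j$, summing over $j$, and using $\beta^2\sqrt{n}=1$, Young's inequality with a parameter $\alpha>0$ gives
\[
	2\sqrt{n}\int\sum_j Lu_j^3\,g\,\gradn\varphi^n_j\,d\mu_n
	\le
	3\alpha\int\sum_j (\gradn\varphi^n_j)^2 u_j^4\,d\mu_n
	+ \frac{3}{\alpha}\int\sum_j\big((\partial_{j-1}-\partial_j)g\big)^2\,d\mu_n .
\]
Taking $\alpha = 6/\sqrt{n}$, the second term becomes $\tfrac{\sqrt{n}}{2}\int\sum_j((\partial_{j-1}-\partial_j)g)^2\,d\mu_n$, which is absorbed by $-n\int gLg\,d\mu_n$ in the variational formula defining $\|\cdot\|_{-1,n}$, while the first term equals $\tfrac{18}{\sqrt{n}}\sum_j \esp_n[u_j^4](\gradn\varphi^n_j)^2$. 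Invoking the static fourth-moment bound $\esp_n[u_j^4]\le Cn^{-1}$ from Section 4 together with $\sum_j(\gradn\varphi^n_j)^2 \le C\sqrt{n}\,\mathcal{E}(\partial_x\varphi)$, this is at most $C/n$ uniformly in time. Hence $\big\|\sqrt{n}\sum_j Lu_j^3\gradn\varphi^n_j\big\|_{-1,n}\le C/n$, and the Kipnis--Varadhan inequality yields the stated bound $CT/n$.

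I do not expect a genuine obstacle: the whole argument is the analogue of Lemma \ref{thm:lemma-Lu2}, and replacing $u_j^2$ by $u_j^3$ simply upgrades the moment gain from $\esp_n[u^2]=O(n^{-1/2})$ to $\esp_n[u^4]=O(n^{-1})$, which is exactly what improves $T/\sqrt{n}$ to $T/n$. The one point requiring care is the exact cancellation of the linear $\beta^2 u_j$ term after integration by parts: without it one would be left with a contribution of the form $\sqrt{n}\int_0^t\sum_j\beta^2 u_j\gradn\varphi^n_j\,ds$, which is only $O(1)$ and not negligible, so it matters that the chain-rule term coming from $\partial_j(u_j^2)=2u_j$ matches the coefficient $6\beta^2$ coming from the diffusive part of $L$.
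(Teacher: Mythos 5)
Your proposal is correct and follows essentially the same route as the paper: the paper's proof consists precisely of the integration-by-parts identity $\int Lu_j^3\,g\,d\mu_n = 3\beta^2\int u_j^2(\partial_{j-1}-\partial_j)g\,d\mu_n$ (in which the $6\beta^2 u_j$ term cancels the chain-rule contribution, exactly as you note) followed by the Young's-inequality/Kipnis--Varadhan argument of Lemma \ref{thm:lemma-Lu2}, with the fourth-moment bound $\esp_n[u^4]=O(n^{-1})$ producing the improvement from $T/\sqrt{n}$ to $T/n$. Your choice of $\alpha$ and the absorption of the Dirichlet-form term match the paper's scheme, so there is nothing to correct.
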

\begin{proof}
	Let $g\in \mathscr{C}$. By integration-by-parts,
	\begin{eqnarray*}
	 \int Lu^3_j g d\mu_n
	&=&
	3\int (\bW_{j-1}-\bW_j)u^2_j g d\mu_n + 6 \beta^2 \int u_j g d\mu_n\\
	&=& 3\beta^2 \int u_j^2 (\partial_{j-1}-\partial_j) gd\mu_n.
\end{eqnarray*}
	The proof is then similar to the previous lemma.
\end{proof}
%%%%%%%%%%%%%%%%%%%%%%%%%%%%%%%%%%%%%%%%%%%%%%%%%%%%%%%%%%
%%%%%%%%%%%%%%%%%%%%%%%%%%%%%%%%%%%%%%%%%%%%%%%%%%%%%%%%%%
We now focus on the term $W_{j-1}W_j$. Note that
\begin{eqnarray*}
	W_{j-1}W_j = \bW_{j-1}\bW_j + \frac{1}{\sqrt{n}} [W_{j-1}+W_j] - \frac{1}{n}.
\end{eqnarray*}
An $L^2$ computation easily shows that the contribution of the linear terms is tight. The term $\frac{1}{n}$ will disappear as we only test against gradients. We are left to show the tightness of the term 
\begin{eqnarray*}
	\widetilde{\mathcal{B}}^n_t(\varphi)
	=
	\int^t_0 \sqrt{n} \sum_j \bW_{j-1}(sn)\bW_j(sn) \gradn \varphi^n_j ds.
\end{eqnarray*}
By Proposition \ref{thm:second-order} and stationarity,
%\begin{eqnarray}
%	&&\esp_n\left[ \left|
%		\int^{t_2}_{t_1} \sqrt{n} \sum_j
%		\left\{
%			\bW_{j-1}(sn)\bW_{j}(sn)-\left(\overrightarrow{W}^l_j(sn)\right)^2 + \frac{\beta^2}{l}
%		\right\} \gradn \varphi^n_j ds
%	\right|^2\right]
%	\leq C \frac{(t_2-t_1)l}{\sqrt{n}}.
%\end{eqnarray}
\begin{eqnarray*}
	&&\esp_n\left[ \left|
		\widetilde{\mathcal{B}}^n_{t_2}(\varphi)-\widetilde{\mathcal{B}}^n_{t_1}(\varphi)
		-
		\int^{t_2}_{t_1} \sqrt{n} \sum_j
		\tau_jQ(l,sn) \gradn \varphi^n_j ds
	\right|^2\right]
	\leq C \left( \frac{(t_2-t_1)l}{\sqrt{n}} + \frac{(t_2-t_1)^2}{l^2}\right).
\end{eqnarray*}
On the other hand, a careful $L^2$ computation taking dependencies into account shows that
%\begin{eqnarray}
%	\esp_n\left[ \left|
%	\int^{t_2}_{t_1} \sqrt{n} \sum_j
%		\left\{
%			\left(\overrightarrow{W}^l_j(sn)\right)^2 - \frac{\beta^2}{l}
%		\right\} \gradn \varphi^n_j
%	\right|^2\right]
%	\leq C \frac{(t_2-t_1)^2\sqrt{n}}{l}.
%\end{eqnarray}
\begin{eqnarray*}
	\esp_n\left[ \left|
	\int^{t_2}_{t_1} \sqrt{n} \sum_j
		\tau_jQ(l,sn) \gradn \varphi^n_j ds
	\right|^2\right]
	\leq C \frac{(t_2-t_1)^2\sqrt{n}}{l}.
\end{eqnarray*}
For $\frac{1}{n} \leq t_2-t_1 \leq 1$, we can take $l\sim \sqrt{(t_2-t_1)n}$ in the above two inequalities to get
\begin{eqnarray*}
	\esp_n\left[ \left|
		\int^{t_2}_{t_1} \sqrt{n} \sum_j
			\bW_{j-1}(sn)\bW_{j}(sn) \gradn \varphi^n_j ds
	\right|^2\right]
	\leq
	C (t_2-t_1)^{3/2}.
\end{eqnarray*}
For $t_2-t_1 \leq \frac{1}{n}$, a crude $L^2$ bound yields
%\begin{eqnarray*}
%	\esp_n\left[ \left|
%		\int^{t_2}_{t_1} \sqrt{n} \sum_j
%			\bW_{j-1}(sn)\bW_{j}(sn) \gradn \varphi^n_j
%	\right|^2\right]
%	\leq
%	C (t_2-t_1)^2 \sqrt{n}
%	\leq
%	C (t_2-t_1)^{3/2}.
%\end{eqnarray*}
\begin{eqnarray*}
	\esp_n\left[ \left|
		\widetilde{\mathcal{B}}^n_{t_2}(\varphi)-\mathcal{\widetilde{B}}^n_{t_1}(\varphi)
	\right|^2\right]
	\leq
	C (t_2-t_1)^2 \sqrt{n}
	\leq
	C (t_2-t_1)^{3/2}.
\end{eqnarray*}
This proves tightness.
%%%%%%%%%%%%%%%%%%%%%%%%%%%%%%%%%%%%%%%%%%%%%%%%%%%%%%%%%%
%%%%%%%%%%%%%%%%%%%%%%%%%%%%%%%%%%%%%%%%%%%%%%%%%%%%%%%%%%
%%%%%%%%%%%%%%%%%%%%%%%%%%%%%%%%%%%%%%%%%%%%%%%%%%%%%%%%%%
%%%%%%%%%%%%%%%%%%%%%%%%%%%%%%%%%%%%%%%%%%%%%%%%%%%%%%%%%%
%%%%%%%%%%%%%%%%%%%%%%%%%%%%%%%%%%%%%%%%%%%%%%%%%%%%%%%%%%
%%%%%%%%%%%%%%%%%%%%%%%%%%%%%%%%%%%%%%%%%%%%%%%%%%%%%%%%%%

\section{Identification of the Limit}

By tightness, we obtain processes $\field,\, \spart,\, \mpart$ and $\mart$ such that
\begin{eqnarray*}
	\lim_{n\to\infty}\field^n &=& \field, \phantom{blal}\lim_{n\to\infty} \spart^n = \spart,\\
	\lim_{n\to\infty} \mpart^n &=& \mpart, \phantom{bla} \lim_{n\to\infty} \mart^n = \mart,
\end{eqnarray*}
along a subsequence that we still denote by $n$.
%%%%%%%%%%%%%%%%%%%%%%%%%%%%%%%%%%%%%%%%%%%%%%%%%%%%%%%%%%
%%%%%%%%%%%%%%%%%%%%%%%%%%%%%%%%%%%%%%%%%%%%%%%%%%%%%%%%%%
%%%%%%%%%%%%%%%%%%%%%%%%%%%%%%%%%%%%%%%%%%%%%%%%%%%%%%%%%%
%%%%%%%%%%%%%%%%%%%%%%%%%%%%%%%%%%%%%%%%%%%%%%%%%%%%%%%%%%
\subsection{Convergence at fixed times}
A straightforward  adaptation of the arguments in \cite{DGP}, Section 4.1.1, shows that $\field^n_t$ converges to a white noise for each fixed time $t\in[0,T]$. This in turns proves that the limit satisfies property (S).
%%%%%%%%%%%%%%%%%%%%%%%%%%%%%%%%%%%%%%%%%%%%%%%%%%%%%%%%%%
%%%%%%%%%%%%%%%%%%%%%%%%%%%%%%%%%%%%%%%%%%%%%%%%%%%%%%%%%%
%%%%%%%%%%%%%%%%%%%%%%%%%%%%%%%%%%%%%%%%%%%%%%%%%%%%%%%%%%
%%%%%%%%%%%%%%%%%%%%%%%%%%%%%%%%%%%%%%%%%%%%%%%%%%%%%%%%%%
\subsection{Linear terms }\label{sec:linear-terms}
We now consider the terms involving the expressions $\beta^2 u_j$ and $\displaystyle \frac{1}{\sqrt{n}} W_j$. 
%Note that the prefactor $\sqrt{n}$ cancels in both cases.
By the mean-value theorem,
\begin{eqnarray*}
	\esp_n\left[
		\sup_{t\leq T} 
		\left|\int^t_0  \sqrt{n}\sum_j \beta^2 u_j(sn) \gradn \varphi^n_t ds
		- \int^t_0 \fieldn_s(\partial_x \varphi)\, ds
	\right|^2\right]
	\leq C \frac{T^2}{n}. 
\end{eqnarray*}
By tightness of the field, we then get
\begin{eqnarray*}
	\lim_{n\to\infty}\int^{\cdot}_0 \sqrt{n}\sum_j \beta^2 u_j(sn)\gradn \varphi^n_t ds
	=
	\int^{\cdot}_0 \field_s(\partial_x\varphi)\, ds.
\end{eqnarray*}
The convergence of the terms involving $W_j$ instead of $u_j$ follows by comparison as
\begin{eqnarray*}
	\esp_n\left[
		\sup_{t\leq T}
		\left|\int^t_0 \sqrt{n}\sum_j\left\{ \beta^2 u_j(sn) - \frac{1}{\sqrt{n}} W_j(sn) \right\} \gradn \varphi^n_t ds
	\right|^2\right]
	\leq C \frac{T^2}{\sqrt{n}}. 
\end{eqnarray*}
%WE NEED AN EXTRA $\sqrt{n}$. THIS SEEMS CORRECT:
%\begin{eqnarray}
%	\esp_n\left[
%		\sup_{t\leq T}
%		\left|\int^t_0 \sum_j\left\{u_j(sn) - W_j(sn) \right\} \gradn \varphi^n_t ds
%	\right|^4\right]
%	\leq C \frac{T^3}{\sqrt{n}}. 
%\end{eqnarray}
Hence, all linear terms appearing in the previous section converge to transport terms.

%%%%%%%%%%%%%%%%%%%%%%%%%%%%%%%%%%%%%%%%%%%%%%%%%%%%%%%%%%
%%%%%%%%%%%%%%%%%%%%%%%%%%%%%%%%%%%%%%%%%%%%%%%%%%%%%%%%%%
%%%%%%%%%%%%%%%%%%%%%%%%%%%%%%%%%%%%%%%%%%%%%%%%%%%%%%%%%%
%%%%%%%%%%%%%%%%%%%%%%%%%%%%%%%%%%%%%%%%%%%%%%%%%%%%%%%%%%
\subsection{Martingale term}
The quadratic variation of the martingale part satisfies
	\begin{eqnarray*}
		\lim_{n\to\infty} \langle\mart^n(\varphi)\rangle_t = t \| \partial_x \varphi \|^2_{L^2}.
	\end{eqnarray*}
	By a criterion of Aldous \cite{A}, this implies convergence to the white noise.
%%%%%%%%%%%%%%%%%%%%%%%%%%%%%%%%%%%%%%%%%%%%%%%%%%%%%%%%%%
%%%%%%%%%%%%%%%%%%%%%%%%%%%%%%%%%%%%%%%%%%%%%%%%%%%%%%%%%%
%%%%%%%%%%%%%%%%%%%%%%%%%%%%%%%%%%%%%%%%%%%%%%%%%%%%%%%%%%
%%%%%%%%%%%%%%%%%%%%%%%%%%%%%%%%%%%%%%%%%%%%%%%%%%%%%%%%%%
\subsection{Symmetric term}
Recall that
\begin{eqnarray*}
	\spart^n_t(\varphi)
	=
	\frac12 \int^t_0 \sum_j \bW_j(sn)\lapln \varphi^n_j ds.
\end{eqnarray*}
The argument used to treat the linear terms in Section \ref{sec:linear-terms} immediately shows that
\begin{eqnarray*}
	\lim_{n\to\infty} \spart^n_{\cdot}(\varphi) = \int^{\cdot}_0 \field_s(\partial_x^2 \varphi) ds.
\end{eqnarray*}
%%%%%%%%%%%%%%%%%%%%%%%%%%%%%%%%%%%%%%%%%%%%%%%%%%%%%%%%%%
%%%%%%%%%%%%%%%%%%%%%%%%%%%%%%%%%%%%%%%%%%%%%%%%%%%%%%%%%%
%%%%%%%%%%%%%%%%%%%%%%%%%%%%%%%%%%%%%%%%%%%%%%%%%%%%%%%%%%
%%%%%%%%%%%%%%%%%%%%%%%%%%%%%%%%%%%%%%%%%%%%%%%%%%%%%%%%%%
\subsection{Anti-symmetric term}
All that is left is to identify the limit of the term $\widetilde{\mathcal{B}}^n_t$. Define a modified version of the field by
\begin{eqnarray*}
	\widetilde{\mathcal{X}}^n_t(\varphi) = \sum_{j} \bW_j(sn)\gradn \varphi^n_j.
\end{eqnarray*}
%By an $L^2$ computation,
%\begin{eqnarray}
%	\esp\left[ \sup_{t\leq T}\left| \int^t_0 \left( \fieldn_r(\varphi)- \widetilde{\mathcal{X}}^n_r(\varphi)\right)\, dr \right|^2\right]
%	\leq
%	C \frac{T^2}{\sqrt{n}},
%\end{eqnarray}
%so that, when integrated over time, the field and the modified field produce the same limit.
By careful $L^2$ computations,
\begin{eqnarray*}
\esp\left[ \sup_{t\leq T}\left| \int^t_0 \left( \fieldn_r(\varphi)- \widetilde{\mathcal{X}}^n_r(\varphi)\right)\, dr \right|^2\right]
	&\leq&
	C \frac{T^2}{\sqrt{n}},\\
	\esp\left[ \sup_{t\leq T}\left| \int^t_0 \left( \fieldn_r(\varphi)^2- \widetilde{\mathcal{X}}^n_r(\varphi)^2\right)\, dr \right|^2\right]
	&\leq&
	C \frac{T^2}{\sqrt{n}},
\end{eqnarray*}
so that, when integrated over time, the field and the modified field (and their squares) are equivalent.

Recall $\iota_{\varepsilon}(x)=\varepsilon^{-1}{\bf 1}_{(x,x+\varepsilon]}$ and observe that
%\begin{eqnarray}
%	n Q(\varepsilon \sqrt{n}, nt) = \left( \widetilde{\mathcal{X}}^n_t(\iota_{\varepsilon}(0))\right)^2-\frac{1}{\varepsilon}.
%\end{eqnarray}
\begin{eqnarray*}
	\sqrt{n} \sum_j \tau_j Q(\varepsilon \sqrt{n}, nt) \grad^n \phi^n_j
	&=&
	\frac{1}{\sqrt{n}} \sum_j \left( \frac{1}{\varepsilon} \sum_k \bW_k {\bf 1}_{[\tfrac{j-nt}{\sqrt{n}},\tfrac{j-nt}{\sqrt{n}}+\varepsilon)}(\tfrac{k-nt}{\sqrt{n}}) \right)^2 \grad^n \phi^n_j \\
	&=&
	\frac{1}{\sqrt{n}} \sum_j \left( \widetilde{\mathcal{X}}^n_t(\iota_{\varepsilon}(\tfrac{j-nt}{\sqrt{n}}))\right)^2 \grad^n \phi^n_j.
\end{eqnarray*}
From here, we obtain the limit %limits
%\begin{eqnarray}
%	\lim_{n\to\infty} n Q(\varepsilon \sqrt{n}, nt) = \left(\field_t(\iota_{\varepsilon}(0))\right)^2-\frac{1}{\varepsilon}
%\end{eqnarray}
%and
\begin{eqnarray*}
	\mathcal{A}^{\varepsilon}_{s,t}(\varphi) = \lim_{n\to\infty} \int^t_s\sqrt{n}\sum_j \tau_jQ(\varepsilon \sqrt{n}, nr) \gradn \varphi^n_j dr. 
\end{eqnarray*}
%The second limit 
This does not follow immediately from the convergence of the field as $\iota_{\varepsilon}$ is not an $\mathcal{S}(\re)$ function. However, it can be approximated by $\mathcal{S}(\re)$ functions from where the convergence follows (see \cite{GJ-arma}, Section 5.3).

By Proposition \ref{thm:second-order},
\begin{eqnarray*}
	\esp\left[ \left| \widetilde{\mathcal{B}}_t(\varphi)-\widetilde{\mathcal{B}}_s(\varphi)
	- \int^t_s \sqrt{n}\sum_j \tau_j Q(l,nr)\gradn \varphi^n_j dr 
	\right|^2\right]
	&\leq&
	C \left( \frac{(t-s)l}{\sqrt{n}} + \frac{(t-s)^2}{l^2} \right).
\end{eqnarray*}
With $l \sim \varepsilon \sqrt{n}$ and taking the limit as $n\to\infty$,
\begin{eqnarray}\label{eq:B-A}
	\esp\left[ \left| \widetilde{\mathcal{B}}_t(\varphi)-\widetilde{\mathcal{B}}_s(\varphi)
	- \mathcal{A}^{\varepsilon}_{s,t}(\varphi) 
	\right|^2\right]
	&\leq&
	C (t-s) \varepsilon.
\end{eqnarray}
The crucial estimate (EC2) follows from the triangle inequality. By Theorem \ref{thm:S-EC-imply}, we get the existence of the limit
\begin{eqnarray*}
	\mathcal{A}_t(\varphi) = \lim_{\varepsilon\to0} \mathcal{A}^{\varepsilon}_{0,t}(\varphi).
\end{eqnarray*}
Estimate \eqref{eq:B-A} further yields $\widetilde{\mathcal{B}}=\mathcal{A}$.

We  now check that $\field$ satisfies the estimate (EC1). By \eqref{eq:B-A}, it is enough to check that
\begin{eqnarray*}
	\esp\left[ \left| \int^t_0\widetilde{\mathcal{X}}^n_s(\partial_x^2 \varphi)\, ds
	\right|^2\right]
	\leq
	\kappa t,
\end{eqnarray*}
for all $n\geq 1$.
By a summation-by-parts and the smoothness of $\varphi$, it is enough to check that
\begin{eqnarray}\label{eq:last-KV}
	\esp\left[ \left| 
		\int^t_0
		\sqrt{n} \sum_j \left( \bW_{j-1}(sn)-\bW_j(sn)\right) \gradn \varphi^n_j ds
	\right|^2\right]
	\leq \kappa t.
\end{eqnarray} 
This follows at once from Kipnis-Varadhan inequality and the following computation: with $f\in\mathscr{C}$, integration-by-parts and Young's inequality yield,
\begin{eqnarray*}
	&&2 \int \sqrt{n} \sum_j \left( \bW_{j-1}(sn)-\bW_j(sn)\right) \gradn \varphi^n_j f\, d\mu_n
	\\
	&=&
	2 \int \sum_j \gradn\varphi^n_j \left( \partial_{j-1}-\partial_j\right)   f\, d\mu_n \\
	&\leq&
	\frac{2}{\sqrt{n}}\sum_j (\gradn\varphi^n_j)^2  
	+ \frac{\sqrt{n}}{2} \int \sum_j \left( ( \partial_{j-1}-\partial_j)   f\right)^2 d\mu_n.
\end{eqnarray*}
This proves \eqref{eq:last-KV}.

Finally, we note that all our estimates can be applied to the reversed process $\{\field^n_{T-t}:\, t\in[0,T] \}$. This shows that $\field$ satisfies Condition 3 of Definition \ref{def:ES}.
%%%%%%%%%%%%%%%%%%%%%%%%%%%%%%%%%%%%%%%%%%%%%%%%%%%%%%%%%%
%%%%%%%%%%%%%%%%%%%%%%%%%%%%%%%%%%%%%%%%%%%%%%%%%%%%%%%%%%
%%%%%%%%%%%%%%%%%%%%%%%%%%%%%%%%%%%%%%%%%%%%%%%%%%%%%%%%%%
%%%%%%%%%%%%%%%%%%%%%%%%%%%%%%%%%%%%%%%%%%%%%%%%%%%%%%%%%%

%%%%%%%%%%%%%%%%%%%%%%%%%%%%%%%%%%%%%%%%%%%%%%%%%%%%%%%%%%
%%%%%%%%%%%%%%%%%%%%%%%%%%%%%%%%%%%%%%%%%%%%%%%%%%%%%%%%%%
%%%%%%%%%%%%%%%%%%%%%%%%%%%%%%%%%%%%%%%%%%%%%%%%%%%%%%%%%%
%%%%%%%%%%%%%%%%%%%%%%%%%%%%%%%%%%%%%%%%%%%%%%%%%%%%%%%%%%
%%%%%%%%%%%%%%%%%%%%%%%%%%%%%%%%%%%%%%%%%%%%%%%%%%%%%%%%%%
%%%%%%%%%%%%%%%%%%%%%%%%%%%%%%%%%%%%%%%%%%%%%%%%%%%%%%%%%%

\appendix

\section{Estimates on the terms of order 3}

The goal of this section is to estimate the term
\begin{eqnarray*}
	\int^t_0 \sqrt{n}\sum_j W_{j-1}(sn)u_j^2(sn)\gradn \varphi^n_j ds.
\end{eqnarray*}
We will show that this expression  only contributes a few transport terms. The following computations are inspired by \cite{BGJS}.

%We start with the observation that expressions built with monomials of order 3 are tight. Indeed, let $F$ be a (centered) monomial of order 3 in the $u_j$'s and the $W_j$'s (for instance, the one we are considering) and $F_j = \tau_j F$, then $\esp F_j^2 \leq C n^{-3/2}$ and
%\begin{eqnarray}
%	\esp_n\left[\left| \int^{t_2}_{t_1} \sqrt{n}\sum_j F_j \gradn \varphi^n_j ds\right|^2\right]
%	&\leq&
%	Cn |t_2-t_1| \int^{t_1}_{t_2} \sum_j \esp_n[F_j^2] (\gradn\varphi^n_j)^2 ds\\
%	&\leq&
%	C |t_2-t_1|^2 \mathcal{E}(\partial_x\varphi).	
%\end{eqnarray}
%In particular, this implies that, 
We start with the observation that, in monomials of order 3 (and higher), we can replace each instance of the $u_j$'s by the corresponding $W_j$'s by paying the price of a term that converges to zero in the ucp topology. For example, a simple $L^2$ computation shows that
\begin{eqnarray*}
	\esp_n\left[\sup_{t\leq T}\left| \int^t_0 \sqrt{n}\sum_j (W_{j-1}(sn)u_j^2(sn)-W_{j-1}(sn)W_j^2(sn)) \gradn \varphi^n_j ds\right|^2\right]
	&\leq& C\frac{T^2}{\sqrt{n}}.
\end{eqnarray*}
Much in the same way, we see that indexes can be shifted. For instance, by a summation by parts, we get
\begin{eqnarray*}
	\esp_n\left[\sup_{t\leq T}\left| \int^t_0 \sqrt{n}\sum_j (W_{j}^3(sn)-W_{j-1}^3(sn)) \gradn \varphi^n_j ds\right|^2\right]
	&\leq& C\frac{T^2}{n}.
\end{eqnarray*}
Any monomial of degree 3 can be treated similarly. Note that indexes can also be shifted in expressions involving $\beta^2 u_j$.

Based on these considerations,
\begin{eqnarray*}
	L u_{j-1}u_ju_{j+1} &=& W_{j-2}W_{j}W_{j+1} - 3W_{j-1}W_jW_{j+1} \\
									&&+ W_{j-1}^2W_{j+1}+W_{j-1}W_j^2
									+ E^{(1)}_j,\\
	Lu^2_{j-1}u_{j+1} &=& 2\beta^2u_{j} + 2W_{j-2}W_{j-1}W_{j+1}-3W_{j-1}^2W_{j+1}+W_{j-1}^2W_j+ E^{(2)}_j\\			
%	Lu_{j-1}^2u_j &=& 2\beta^2(u_j-u_{j-1}) + 2W_{j-2}W_{j-1}W_j-3W_{j-1}^2W_j + W_{j-1}^3 + E^{(3)}_j,\\
	Lu_{j-1}^2u_j &=&  2W_{j-2}W_{j-1}W_j-3W_{j-1}^2W_j + W_{j-1}^3 + E^{(3)}_j,\\
	Lu^3_j &=& 6\beta^2 u_j +3W_{j-1}W_j^2-3W_{j-1}^3 +  E^{(4)}_j,\\
\end{eqnarray*}
where
\begin{eqnarray*}
	\esp_n\left[\sup_{t\leq T}\left| \int^t_0 \sqrt{n}\sum_j E^{(i)}_j(sn) \gradn \varphi^n_j ds\right|^2\right]
	\leq C\frac{T^2}{\sqrt{n}}, \quad i=1,\cdots,4.
\end{eqnarray*}
Putting everything together, we get
%\begin{eqnarray*}
%	\nonumber
%	\frac{10}{9} W_{j-1}W_j^2 &=& -\beta^2u_{j-1}+2\beta^2u_j+6\beta^2u_{j+1} \\
%	\nonumber
%		&+&
%		9W_{j-2}W_{j}W_{j+1}-27W_{j-1}W_{j}W_{j+1}+6W_{j-2}W_{j-1}W_{j+1}
%		+2W_{j-2}W_{j-1}W_{j}\\
%		\nonumber
%		&+& L u_{j-1}u_ju_{j+1}
%		+\frac13 Lu^2_{j-1}u_{j+1}
%		+\frac19 Lu_{j-1}^2u_j
%		+\frac{1}{27}Lu^3_j\\
%		\label{eq:decomposition-order3}
%		&+& E_j,
%\end{eqnarray*}

\begin{eqnarray}
	\nonumber
	10 W_{j-1}W_j^2 &=& -8 \beta^2 u_j
	 + \frac13 Lu_j^3 + Lu_{j-1}^2u_j + 3Lu_{j-1}^2u_j + 9Lu_{j-1}u_ju_{j+1}\\
	 \nonumber
	 &&
	 - 2W_{j-2}W_{j-1}W_j
	 -6W_{j-2}W_{j-1}W_{j+1}
	 -9W_{j-2}W_jW_{j+1}\\
	 &&
	 \label{eq:decomposition-order3}
	 +27W_{j-1}W_jW_{j+1}+ E_j,
\end{eqnarray}
with
\begin{eqnarray*}
	\esp_n\left[\sup_{t\leq T}\left| \int^t_0 \sqrt{n}\sum_j E_j(sn) \gradn \varphi^n_j ds\right|^2\right]
	\leq C\frac{T^2}{\sqrt{n}}.
\end{eqnarray*}
By an $L^2$ computation, the linear terms are readily seen to be tight (and to contribute to transport terms in the limit). 
The terms involving $L$ can be treated with the method of Lemma \ref{thm:lemma-Lu2} and \ref{thm:lemma-Lu3} and vanish in the limit.
The rest of this section is devoted to show that the monomials of order 3 in $W$ can be neglected. It amounts to showing a second-order Boltzmann-Gibbs principle for these terms:
%%%%%%%%%%%%%%%%%%%%%%%%%%%%%%%%%%%%%%%%%%%%%%%%%%%%%%%%%%
%%%%%%%%%%%%%%%%%%%%%%%%%%%%%%%%%%%%%%%%%%%%%%%%%%%%%%%%%%
\begin{proposition}\label{thm:BG-3}
	Let $l\geq 2$. There exists a constant $C>0$ such that
	\begin{eqnarray*}
		&&\esp_n\left[ \sup_{t\leq T}\left| 
			\int^t_0 \sqrt{n} \sum_j \Big\{ \bW_{j-1}(sn)\bW_j(sn)\bW_{j+1}(sn) 
			\right. \right. \\
		&&\phantom{blablablablablablablablabla}
		\left. \left. 
		-\left( \overrightarrow{W}^l_{j+1}(sn)\right)^3\Big\} \gradn \varphi^n_j ds
		\right|^2\right] 
		\leq C \left\{ \frac{Tl}{n} +\frac{T^2}{l^2}\right\} \mathcal{E}(\partial_x \varphi).
	\end{eqnarray*}
	%where $\zeta_n=\esp_n[\bW^3_0]$.
\end{proposition}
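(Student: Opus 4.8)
The plan is to mimic the proof of Proposition \ref{thm:second-order} and of the companion lemma used there: we reduce $\bW_{j-1}\bW_j\bW_{j+1}$ to $(\overrightarrow{W}^l_{j+1})^3$ by replacing the three factors one at a time, keeping the \emph{same} averaging window $\{j+1,\dots,j+l\}$ throughout. The underlying algebraic identity is
\begin{align*}
	\bW_{j-1}\bW_j\bW_{j+1}-(\overrightarrow{W}^l_{j+1})^3
	&= \bW_{j-1}\bW_j\bigl[\bW_{j+1}-\overrightarrow{W}^l_{j+1}\bigr]
	+ \bW_{j-1}\overrightarrow{W}^l_{j+1}\bigl[\bW_j-\overrightarrow{W}^l_{j+1}\bigr]\\
	&\quad + (\overrightarrow{W}^l_{j+1})^2\bigl[\bW_{j-1}-\overrightarrow{W}^l_{j+1}\bigr],
\end{align*}
to which we add by hand a few lower-order terms dictated by the integrations by parts below. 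In the first term the multiplier $\bW_{j-1}\bW_j$ depends only on the coordinates $u_{j-1},u_j$, which lie outside the window; after a shift of index this is exactly the setting of the one-block estimate, the only difference being that the multiplier now depends on two coordinates instead of one. The proof of that estimate goes through verbatim as soon as the support of the multiplier is disjoint from $\{0,\dots,l-1\}$, and with $\|\bW_{j-1}\bW_j\|_{L^2(\mu_n)}^2=\esp_n[\bW^2]^2\le Cn^{-1}$ (independence of the coordinates under $\mu_n$ together with the static moment estimate) this term produces the $\frac{Tl}{n}\mathcal{E}(\partial_x\varphi)$ contribution.

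The second and third terms are the cubic analogues of the expression handled in the companion lemma to Proposition \ref{thm:second-order}. In each we expand the increment ($\bW_j-\overrightarrow{W}^l_{j+1}$, resp.\ $\bW_{j-1}-\overrightarrow{W}^l_{j+1}$) as a telescoping sum $\sum_r\theta_r(\bW_{j+r}-\bW_{j+r+1})$ with $\theta_r=(l-r)/l$ and integrate by parts, for $f\in\mathscr{C}$, using the integration-by-parts formula of Section 5. This generates a ``gradient'' term, in which the derivative hits $f$ and which is bounded by Young's inequality with parameter $\alpha\sim l/\sqrt n$ followed by the Kipnis--Varadhan inequality, using the bounds $\esp_n[(\overrightarrow{W}^l_j)^2]\le C(l\sqrt n)^{-1}$ and $\esp_n[(\overrightarrow{W}^l_j)^4]\le C(l\sqrt n)^{-2}$ (elementary moment expansions plus the static estimate); this again yields a $\frac{Tl}{n}\mathcal{E}(\partial_x\varphi)$-type contribution. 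It also generates correction terms carrying a prefactor $\beta^2/l=(l\sqrt n)^{-1}$, in which the derivative falls on $\bW_{j-1}\overrightarrow{W}^l_{j+1}$, resp.\ on $(\overrightarrow{W}^l_{j+1})^2$; these are polynomials of strictly lower degree in the $\bW$'s. Their deterministic parts are cancelled by the lower-order terms added to the decomposition, just as the $\sigma_n^2/l$ term in the quadratic case, or vanish upon summation because here the test function is a discrete gradient, so that $\sum_j\gradn\varphi^n_j=0$ (this is why, unlike in Proposition \ref{thm:second-order}, no recentering of $(\overrightarrow{W}^l_{j+1})^3$ is needed); the remaining stochastic parts are controlled by a crude $L^2$ bound --- the $j$-indexed summands being $O(l)$-dependent with second moments $O(n^{-1})$ --- and give the $\frac{T^2}{l^2}\mathcal{E}(\partial_x\varphi)$ contribution. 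Adding the three estimates proves the proposition.

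The step I expect to be the main obstacle is this bookkeeping of the integration-by-parts corrections in the second and third terms. In the quadratic Boltzmann--Gibbs principle a single correction, $\sigma_n^2/l$, had to be kept track of; here the derivatives of $(\overrightarrow{W}^l_{j+1})^2$ and of $\bW_{j-1}\overrightarrow{W}^l_{j+1}$ produce several families of lower-order terms, and one must check that each of them is either a constant times $\gradn\varphi^n_j$ (hence summing to zero), or a $(l\sqrt n)^{-1}$ multiple of a field bounded in $L^2$ (hence negligible after the crude estimate), or matched by an explicit term added to the right-hand side. This combinatorial step, which follows the computations of \cite{BGJS}, is the technical heart; the extension of the one-block estimate to several-coordinate multipliers is, by contrast, routine, since its proof uses only that the multiplier is independent of the averaging window.
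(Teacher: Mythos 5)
Your proposal is correct and takes essentially the same route as the paper: a telescoping replacement of the three factors in which the one-block estimate (with a multiplier independent of the averaging window) yields the $Tl/n$ contribution, the integration-by-parts/Young/Kipnis--Varadhan argument with $\alpha\sim l/\sqrt{n}$ together with the bounds $\esp_n[(\overrightarrow{W}^l)^2]\leq C(l\sqrt n)^{-1}$, $\esp_n[(\overrightarrow{W}^l)^4]\leq C(l\sqrt n)^{-2}$ handles the window-dependent multipliers, and crude $L^2$ bounds on the $1/l$-prefactored corrections give the $T^2/l^2$ contribution. The paper only differs cosmetically: it groups the first two replacements as $\bW_{j-1}$ times the quadratic Boltzmann--Gibbs statement of Proposition \ref{thm:second-order} and builds the integration-by-parts corrections into the term $F_{n,j}$ (using window $\overrightarrow{W}^l_j$ rather than $\overrightarrow{W}^l_{j+1}$) instead of appealing to $\sum_j\gradn\varphi^n_j=0$.
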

\begin{proof}
	Let
	\begin{eqnarray*}
		F_{n,j} 
		&=&
		\left(\overrightarrow{W}^l_{j+1}\right)^2 \left[ \bW_{j-1}-\overrightarrow{W}^l_j\right]\\
		&&
		+ \frac{2\beta^2}{l}\overrightarrow{W}^l_{j+1}\left[\overrightarrow{W}^l_{j}-\bW_j\right]
		- \frac{2\beta^2(l-1)}{l^2}	\bW_j\overrightarrow{W}^l_{j+1}\\
		&&
		+\frac{2\sigma_n(l-1)}{l^2} \overrightarrow{W}^l_{j+1},
	\end{eqnarray*}
	and use the decomposition
	\begin{eqnarray*}\nonumber
		\bW_{j-1}\bW_j\bW_{j+1}-\left( \overrightarrow{W}^l_{j+1}\right)^3 %+\frac{\zeta_n}{l^2}
		&=&
		\bW_{j-1} \left\{ \bW_j\bW_{j+1}-\left( \overrightarrow{W}^l_{j+1}\right)^2+\frac{\sigma_n}{l}\right\}
		+ F_{n,j}		
		\\
%		&+&
%		\left\{
%			\left( \overrightarrow{W}^l_{j+1}\right)^2 \left[ \bW_{j-1}-\overrightarrow{W}^l_j\right]
%			+ \frac{2\beta^2(l-1)}{l^2}	\overrightarrow{W}^l_{j+1}
%		 \right\}
%		 \\
		&-&
		\frac{\sigma_n}{l}\bW_{j-1}
		-\frac{2\beta^2}{l} \overrightarrow{W}^l_{j+1}\left( \overrightarrow{W}^l_j - \bW_j\right)
		\\
		 &+&
		 \frac{2\beta^2(l-1)}{l^2}\bW_j \overrightarrow{W}^l_{j+1}
		- \frac{2\sigma_n(l-1)}{l^2}	\overrightarrow{W}^l_{j+1} %+\frac{\zeta_n}{l^2}
		 \\
		 &+&
		 \left( \overrightarrow{W}^l_{j+1}\right)^2
		 \left\{
		 	\overrightarrow{W}^l_{j}-\overrightarrow{W}^l_{j+1}
		 \right\}.
	\end{eqnarray*}
	The first term on the right-hand-side can be treated with a straightforward adaptation of Proposition \ref{thm:second-order} and gives the bound $\displaystyle C \left(\frac{Tl}{n}+\frac{T^2}{l^2\sqrt{n}}\right)$. 
	The second term is the object of the next lemma.
	The remaining terms can be handled by $L^2$ computations producing overall the bound  $CT^2/l^2$. 
\end{proof}
%%%%%%%%%%%%%%%%%%%%%%%%%%%%%%%%%%%%%%%%%%%%%%%%%%%%%%%%%%
%%%%%%%%%%%%%%%%%%%%%%%%%%%%%%%%%%%%%%%%%%%%%%%%%%%%%%%%%%
\begin{lemma}
%	Let $l\geq 2$ and
%	\begin{eqnarray}
%		F_{n,j} 
%		&=&
%		\left(\overrightarrow{W}^l_{j+1}\right)^2 \left[ \bW_{j-1}-\overrightarrow{W}^l_j\right]\\
%		&&
%		+ \frac{2\beta^2}{l}\overrightarrow{W}^l_{j+1}\left[\overrightarrow{W}^l_{j}-\bW_j\right]
%		- \frac{2\beta^2(l-1)}{l^2}	\bW_j\overrightarrow{W}^l_{j+1}\\
%		&&
%		+\frac{2\sigma_n(l-1)}{l^2} \overrightarrow{W}^l_{j+1}.
%	\end{eqnarray}
	There exists a constant $C>0$ such that
	\begin{eqnarray*}
		&&\esp_n\left[ \sup_{t\leq T}\left| 
			\int^t_0 \sqrt{n} \sum_j  
		 		F_{n,j}(sn)
			\gradn \varphi^n_j ds
		\right|^2\right]
		\leq C \frac{T}{n}\mathcal{E}(\partial_x\varphi).
	\end{eqnarray*}
%	\begin{eqnarray*}
%		&&\esp_n\left[ \left| 
%			\int^t_0 \sqrt{n} \sum_j  
%		 		\left\{
%			\left( \overrightarrow{W}^l_{j+1}(sn)\right)^2 \left[ \bW_{j-1}(sn)-\overrightarrow{W}^l_j(sn)\right]
%				\right. \right. \right. \\
%			&&\phantom{blablabla} 
%			+ \frac{2\beta^2}{l}\overrightarrow{W}^l_{j+1}(sn)(\overrightarrow{W}^l_{j}(sn)-\bW_j)
%			\\
%			&&
%				%\phantom{blablablablablablablablablabla}
%				\left. \left. \left.
%			- \frac{2\beta^2(l-1)}{l^2}	\bW_j(sn)\overrightarrow{W}^l_{j+1}(sn)
%			+\frac{2\sigma_n(l-1)}{l^2} \overrightarrow{W}^l_{j+1}(sn)
%		 \right\}
%			\gradn \varphi^n_j ds
%		\right|^2\right]
%		\leq C \frac{t}{n}\mathcal{E}(\partial_x\varphi).
%	\end{eqnarray*}
\end{lemma}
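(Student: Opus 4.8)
The plan is to prove this lemma exactly as its degree-two counterpart — the lemma that completes the proof of the second-order Boltzmann--Gibbs principle (Proposition~\ref{thm:second-order}) — is proved, the only new feature being that the leading term of $F_{n,j}$ is cubic rather than quadratic in the $\bW$'s. By the Kipnis--Varadhan inequality it suffices to bound $\big\|\sqrt n\sum_j F_{n,j}\,\gradn \varphi^n_j\big\|_{-1,n}^2$ by $C n^{-1}\mathcal{E}(\partial_x\varphi)$, uniformly in the time parameter entering $\varphi^n_j$; integrating over $[0,T]$ then gives the stated bound $C T n^{-1}\mathcal{E}(\partial_x\varphi)$. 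So everything reduces to a static estimate on the variational $\|\cdot\|_{-1,n}$-norm.

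To estimate that norm I would fix $f\in\mathscr{C}$ and evaluate $2\int\sqrt n\sum_j F_{n,j}\,\gradn \varphi^n_j\,f\,d\mu_n$. For the leading piece $(\overrightarrow{W}^l_{j+1})^2[\bW_{j-1}-\overrightarrow{W}^l_j]$ I telescope $\bW_{j-1}-\overrightarrow{W}^l_j=\sum_{k=0}^{l-1}\psi_k(\bW_{j+k-1}-\bW_{j+k})$ with $\psi_k=(l-k)/l$ and integrate by parts bond by bond using $\int(\bW_{m+1}-\bW_m)h\,d\mu_n=\beta^2\int(\partial_{m+1}-\partial_m)h\,d\mu_n$. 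Each such step contributes the ``good'' term $\beta^2(\overrightarrow{W}^l_{j+1})^2\,\gradn \varphi^n_j\,(\partial_{j+k-1}-\partial_{j+k})f$, where the derivative falls on $f$, plus a term where the derivative falls on $(\overrightarrow{W}^l_{j+1})^2=2\,\overrightarrow{W}^l_{j+1}\,\partial\overrightarrow{W}^l_{j+1}$; using $\partial_m W_m=e^{-u_m}=1+\tfrac{\beta^2}{2}-\bW_m$, this becomes an explicit expression, linear in the block average (coming from the constant $1+\tfrac{\beta^2}{2}$) and quadratic in it (coming from $-\bW_m$). The three correction terms in $F_{n,j}$, with their constants $\tfrac{2\beta^2}{l}$, $\tfrac{2\beta^2(l-1)}{l^2}$ and $\tfrac{2\sigma_n^2(l-1)}{l^2}$ — which encode the factor $2$ from differentiating a square, the $l-1$ interior bonds on which $\partial\overrightarrow{W}^l_{j+1}\ne 0$, and the moment identities $\esp_n[e^{-u_m}]=1+\tfrac{\beta^2}{2}$ and $\esp_n[\bW_m e^{-u_m}]=-\Vvv_n[W]=-\sigma_n^2$ — are designed precisely so that, after their own integration by parts and telescoping, these unwanted contributions cancel, leaving the clean identity
\begin{eqnarray*}
	\int\sqrt n\sum_j F_{n,j}\,\gradn \varphi^n_j\,f\,d\mu_n
	=
	\beta^2\sqrt n\sum_j\sum_{k=0}^{l-1}\psi_k\int(\overrightarrow{W}^l_{j+1})^2\,\gradn \varphi^n_j\,(\partial_{j+k-1}-\partial_{j+k})f\,d\mu_n .
\end{eqnarray*}

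With this identity in hand I would apply Young's inequality with parameter $\alpha=2l/\sqrt n$: using $\beta^2\sqrt n=1$ and $\sum_j\sum_k\psi_k\big((\partial_{j+k-1}-\partial_{j+k})f\big)^2\le l\sum_m\big((\partial_{m-1}-\partial_m)f\big)^2$, the $f$-contribution matches exactly the penalty $\tfrac{\sqrt n}{2}\sum_m\int\big((\partial_{m-1}-\partial_m)f\big)^2 d\mu_n$ in the variational formula for $\|\cdot\|_{-1,n}^2$, while the $\varphi$-contribution is at most $C\,\tfrac{l^2}{\sqrt n}\sum_j\esp_n\big[(\overrightarrow{W}^l_{j+1})^4\big](\gradn \varphi^n_j)^2$. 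Since the $\bW_m$ are i.i.d.\ under $\mu_n$, the static estimates of Section~4 give $\esp_n[(\overrightarrow{W}^l_j)^4]\le\tfrac1{l^4}\big(l\,\esp_n[\bW^4]+3l^2(\esp_n[\bW^2])^2\big)\le C/(l^2 n)$, and together with $\sum_j(\gradn \varphi^n_j)^2\le C\sqrt n\,\mathcal{E}(\partial_x\varphi)$ this yields $\big\|\sqrt n\sum_j F_{n,j}\gradn \varphi^n_j\big\|_{-1,n}^2\le C n^{-1}\mathcal{E}(\partial_x\varphi)$; the Kipnis--Varadhan inequality and integration in time then finish the proof. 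The main obstacle is the recombination in the second step: verifying that the three correction terms in $F_{n,j}$ cancel exactly the contributions of the derivative falling on $(\overrightarrow{W}^l_{j+1})^2$. This is the cubic analogue of the computation carried out for the degree-two lemma, but it is genuinely fiddlier, because $\overrightarrow{W}^l_j$ and $\overrightarrow{W}^l_{j+1}$ have heavily overlapping supports and the differentiated factor is a block average rather than a single coordinate, so one must keep careful track of the telescoping indices and of which interior bonds actually contribute.
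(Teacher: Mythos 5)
Your proposal follows essentially the same route as the paper's proof: integration by parts showing that the correction terms in $F_{n,j}$ cancel the contributions where the derivative falls on $\left(\overrightarrow{W}^l_{j+1}\right)^2$, leaving the clean identity with $\left(\overrightarrow{W}^l_{j+1}\right)^2(\partial_{j+k-1}-\partial_{j+k})f$, then Young's inequality with $\alpha=2l/\sqrt{n}$, the moment bound $\esp_n\big[(\overrightarrow{W}^l)^4\big]\leq C/(l^2 n)$, and the Kipnis--Varadhan inequality. The quantitative steps and the final bound match the paper's argument, which is equally brief about verifying the exact cancellation, so your proof is correct and essentially identical in approach.
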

\begin{proof}
	Let $f\in\mathscr{C}$.
	Using integration-by-parts,
	\begin{eqnarray*}
		\int \sqrt{n}\sum_j  F_{n,j}(sn)	
		\varphi_j f\, d\mu_n
		=
		\int \sum_j \sum^{l-1}_{k=0} \psi_k
			\left( \overrightarrow{W}^l_{j+1}\right)^2 \left( \partial_{j+k-1}-\partial_{j+k}\right) f \varphi_j d\mu_n,
	\end{eqnarray*}
%	\begin{eqnarray}
%		&&\int \sqrt{n}\sum_j  \left\{\left( \overrightarrow{W}^l_{j+1}\right)^2 \left[ \bW_{j-1}-\overrightarrow{W}^l_j\right]
%%		\right.\\
%%		&&
%%		\left.
%		+ \frac{2\beta^2(l-1)}{l^2}	\overrightarrow{W}^l_{j+1}
%		\right\}		
%		\varphi_j f\, d\mu_n\\
%		&&=
%		\int \sum_j \sum^{l-1}_{m=0} \psi_m
%			\left( \overrightarrow{W}^l_{j+1}\right)^2 \left( \partial_{j-1+m}-\partial_{j+m}\right) f \varphi_j d\mu_n,
%	\end{eqnarray}
	where $\psi_k = (l-k)/l$. By Young's inequality, twice the above is bounded by
	\begin{eqnarray*}
		\int \sum_j \sum^{l-1}_{k=0} \psi_k \left\{
			\alpha\left( \overrightarrow{W}^l_{j+1}\right)^4 \varphi_j^2
			+\frac{1}{\alpha}\left( \left( \partial_{j+k-1}-\partial_{j+k}\right) f \right)^2 
			\right\}d\mu_n.
	\end{eqnarray*}
	Taking $\alpha=2l/\sqrt{n}$, this is further bounded by
	\begin{eqnarray*}
		&&2\frac{l^2}{\sqrt{n}}\sum_j  \varphi_j^2 \int \left( \overrightarrow{W}^l_{j+1}\right)^4d\mu_n 
		 + \frac{\sqrt{n}}{2} \int \sum_j\left( \left( \partial_{j+k-1}-\partial_{j+k}\right) f \right)^2d\mu_n\\
		 && \leq
		 \frac{2C}{n^{3/2}} \sum_j \varphi_j^2 + \frac{\sqrt{n}}{2} \int \sum_j\left( \left( \partial_{j+k-1}-\partial_{j+k}\right) f \right)^2d\mu_n.
	\end{eqnarray*}
	The result follows from Kipnis-Varadhan inequality.
\end{proof}
%%%%%%%%%%%%%%%%%%%%%%%%%%%%%%%%%%%%%%%%%%%%%%%%%%%%%%%%%%
%%%%%%%%%%%%%%%%%%%%%%%%%%%%%%%%%%%%%%%%%%%%%%%%%%%%%%%%%%
For $T\geq 1/n$ and $l\sim \sqrt{Tn}$, the estimate in Proposition \ref{thm:BG-3} becomes $CT^{3/2}/\sqrt{n}$. On the other hand, a careful $L^2$ computation taking dependencies into account yields
\begin{eqnarray*}
		&&\esp_n\left[ \sup_{t\leq T}\left| 
			\int^t_0 \sqrt{n} \sum_j 		
			\left( \overrightarrow{W}^l_{j+1}(sn)\right)^3
			\gradn \varphi^n_j ds
		\right|^2\right] 
		\leq C \frac{T^2}{l^2}
		\leq 
	C\frac{T^{3/2}}{\sqrt{n}}.
	\end{eqnarray*}
	Combining both estimates:
\begin{eqnarray}\nonumber
	\esp\left[ \sup_{t\leq T}\left| \int^t \sqrt{n}\sum_j \bW_{j-1}(sn)\bW_j(sn)\bW_{j+1}(sn) \gradn \varphi^n_j ds\right|^2\right]
	\leq
	C\frac{T^{3/2}}{\sqrt{n}},
\end{eqnarray}
for $T\geq 1/n$. For $T<1/n$, an $L^2$ computation yields 
\begin{eqnarray}\nonumber
	\esp\left[ \sup_{t\leq T}\left| \int^t \sqrt{n}\sum_j \bW_{j-1}(sn)\bW_j(sn)\bW_{j+1}(sn) \gradn \varphi^n_j ds\right|^2\right]
	\leq
	C T^2
	\leq 
	C\frac{T^{3/2}}{\sqrt{n}}.
\end{eqnarray}
This allows us to control the centered monomials of order 3. To remove the centering, note that difference $\bW_{j-1}\bW_j\bW_{j+1}-W_{j-1}W_jW_{j+1}$ only involves terms of the form 
\begin{eqnarray*}
	\frac{1}{\sqrt{n}}W_kW_m \quad \text{and} \quad \frac{1}{n} W_k,
\end{eqnarray*}
where we used $\esp_n[W]=O(1/\sqrt{n})$. As we know that the terms of order two are tight, the extra $1/\sqrt{n}$ above makes them to vanish. Finally, the linear terms are tight when normalized by $\sqrt{n}$ and hence vanish with the current normalization.

The other terms of order 3 in \eqref{eq:decomposition-order3} are treated similarly.

\end{document}